\documentclass[a4paper,11pt]{article}
\usepackage{amsmath,amsthm,amssymb,amscd,epsfig,esint}
\usepackage{verbatim}
\usepackage[english]{babel}
\textheight=22.15truecm \textwidth=15.7truecm
\evensidemargin=.2true cm \oddsidemargin=.2true cm
\topmargin=-.5truecm
\newtheorem{thm}{Theorem}
\newtheorem{lema}[thm]{Lemma}

\newtheorem{coro}[thm]{Corollary}

\newtheorem{quest}[thm]{Question}

%\numberwithin{equation}{section}
\newcommand{\deltabarra}{\bar{\partial}\,}
\newcommand{\ind}{\text{Index}\,}
\newcommand{\supp}{\text{supp}\,}

\def\R{\mathbb R}
\def\C{\mathbb C}

\def\D{\mathbb D}

\def\B{\mathcal S}

\def\Im{\operatorname{Im}}

\def\B{\mathcal B}

\def\div{\operatorname{div}}

\def\Id{\mathbf{Id}}

%opening
\title{Weighted estimates for Beltrami equations}

\author{Albert Clop \and Victor Cruz}

\date{}

%%%%%%%%%%%%%%%%%%%%%%%%%%%%%%%%%%%%%%%%%%%%%%%%%%%%%%
%%%%%%%%%%%%%%%%%%%%%%%%%%%%%%%%%%%%%%%%%%%%%%%%%%%%%%
\begin{document}

\maketitle

\abstract{We obtain a priori estimates in $L^p(\omega)$ for the generalized Beltrami equation, provided that the coefficients are compactly supported $VMO$ functions with the expected ellipticity condition, and the weight $\omega$ lies in the Muckenhoupt class $A_p$. As an application, we obtain improved regularity for the jacobian of certain quasiconformal mappings. }

\section{Introduction\label{cha:3}}
\noindent
In this paper, we consider the inhomogeneous, Beltrami equation
\begin{equation} 
\deltabarra f(z)-\mu(z)\,\partial f(z)-\nu(z)\,\overline{\partial f(z)}=g(z),\hspace{1cm}a.e.z\in\C\label{eq:Belnohomogenea}
\end{equation} 
where $\mu$, $\nu$ are $L^\infty(\C;\C)$ functions such that $\||\mu|+|\nu|\|_{\infty}\leq k<1$, and $g$ is a measurable, $\C$-valued function. The derivatives $\partial f,\overline\partial f$ are understood in the distributional sense. In the work \cite{AstIwaSak}, the $L^p$ theory of such equation was developed. More precisely, it was shown that if $1+k<p<1+\frac{1}{k}$ and $g\in L^{p}(\mathbb{C})$ then (\ref{eq:Belnohomogenea}) has a solution $f$, unique modulo constants, whose differential $Df$ belongs to $L^p(\mathbb{C})$, and furthermore, the estimate
\begin{equation}\label{aprioriest}
 \|Df\|_{L^p(\C)}\leq C\,\|g\|_{L^p(\C)}
\end{equation}
holds for some constant $C=C(k,p)>0$. For other values of $p$, \eqref{eq:Belnohomogenea} the claim may fail in general. However, in the previous work \cite{Iwa}, Iwaniec  proved that if $\mu\in VMO(\mathbb{C})$ then for any $1<p<\infty$ and any $g\in L^{p}(\mathbb{C})$ one can find exactly one solution $f$ to the $\C$-linear equation
$$\overline\partial f(z)-\mu(z)\,\partial f(z)=g(z)$$
with $Df\in L^p(\mathbb{C})$. In particular, \eqref{aprioriest} holds whenever $p\in(1,\infty)$. Recently, Koski \cite{Kos} has extended this result to the generalized equation \eqref{eq:Belnohomogenea}. For results in other spaces of functions, see \cite{CruMatOro}. \\
\\
In this paper, we deal with weighted spaces, and so we assume $g\in L^p(\omega)$, $1<p<\infty$. Here $\omega$ is a measurable function, and $\omega>0$ at almost every point. By checking the particular case $\mu=0$, one sees that, for a weighted version of the estimate \eqref{aprioriest} to hold, the Muckenhoupt  condition $\omega\in A_p$ is necessary. It turns out that, for compactly supported $\mu\in VMO$, this condition is also sufficient.

\begin{thm}\label{main}
Let $1<p<\infty$. Let $\mu$ be a compactly supported function in $VMO(\C)$, such that $\|\mu\|_\infty<1$, and let $\omega\in A_p$. Then, the equation
$$\overline\partial f(z)-\mu(z)\,\partial f(z)=g(z)$$
has, for $g\in L^p(\omega)$, a solution $f$ with $Df\in L^p(\omega)$, which is unique up to an additive constant. Moreover, one has
$$\|Df\|_{L^p(\omega)}\leq C\,\|g\|_{L^p(\omega)}$$
for some $C>0$ depending on $\mu$, $p$ and $[\omega]_{A_p}$.
\end{thm}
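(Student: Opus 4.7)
My plan is to reduce the Beltrami equation to an operator equation on $L^p(\omega)$ via the Cauchy transform ansatz. Looking for $f = \mathcal{C}h$ with $h \in L^p(\omega)$, where $\mathcal{C}$ is the Cauchy transform, the identities $\deltabarra \mathcal{C}h = h$ and $\partial \mathcal{C}h = \B h$ (the Beurling--Ahlfors transform) turn the equation into
$$(I - \mu\,\B)\,h = g.$$
Once $h$ is produced, the bound on $Df$ follows from $\|Df\|_{L^p(\omega)} \leq \|h\|_{L^p(\omega)} + \|\B h\|_{L^p(\omega)} \leq C\|h\|_{L^p(\omega)}$, since $\B$ is a Calder\'on--Zygmund operator and therefore bounded on $L^p(\omega)$ for $\omega\in A_p$, with norm controlled by $[\omega]_{A_p}$ and $p$.

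The task then becomes proving that $T := I - \mu\,\B$ is invertible on $L^p(\omega)$. I would attack this via the Fredholm alternative. The main analytic ingredient is the compactness of the commutator $[\mu,\B]$ on $L^p(\omega)$, which holds because $\mu$ is a compactly supported $VMO$ function (i.e.\ $\mu\in CMO$); this is the weighted counterpart of Uchiyama's theorem. Granting this, the identity $\mu\B = \B\mu - [\B,\mu]$ exhibits $\mu\B$ as a compact perturbation of a ``multiplication-then-$\B$'' composition. One can then verify that $T$ is Fredholm of index zero, for instance by running the homotopy $T_s := I - s\mu\B$, $s\in[0,1]$, which joins $I$ (invertible) to $T$ through a family that remains Fredholm by stability under compact perturbations.

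Injectivity I would handle by reduction to the unweighted case. If $Th=0$, then $h = \mu\B h$ vanishes outside $\supp \mu$, so $h$ has compact support. By the reverse H\"older inequality for $A_p$ weights (Gehring), a compactly supported element of $L^p(\omega)$ lies in $L^q(\C)$ for some $q>1$, and Iwaniec's unweighted theorem (recalled in the excerpt) then forces $h=0$. Combining injectivity with index zero gives invertibility of $T$, and the norm of $T^{-1}$ depends only on $\mu$, $p$, and $[\omega]_{A_p}$ because each ingredient (CZO $A_p$-bound, commutator compactness constant, Fredholm estimate) has precisely such dependence. Setting $f=\mathcal{C}(T^{-1}g)$ and noting uniqueness up to constants (two solutions differ by an entire function with distributional derivatives in $L^p(\omega)$, hence a constant) completes the argument.

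The main obstacle is the \emph{weighted} compactness of commutators $[b,\B]$ for $b\in CMO$ on $L^p(\omega)$: this goes beyond the classical Coifman--Rochberg--Weiss boundedness, and the quantitative control it provides is what ultimately produces a constant depending only on $\mu$, $p$ and $[\omega]_{A_p}$. Once it is in place, the Fredholm machinery follows the same structural lines as Iwaniec's unweighted argument, and the remaining subtlety is merely tracking the $[\omega]_{A_p}$-dependence through the approximation of $\mu$ by $C_c^\infty$ functions in the $BMO$ seminorm.
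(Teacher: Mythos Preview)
Your overall strategy matches the paper's: reduce to invertibility of $T = \Id - \mu\B$ on $L^p(\omega)$, show $T$ is Fredholm of index zero, and obtain injectivity by reducing to the unweighted case via the open-ended property of $A_p$ and Iwaniec's theorem. The injectivity and uniqueness steps are essentially those of the paper.

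The gap is in your Fredholmness argument. Writing $\mu\B = \B\mu - [\B,\mu]$ only shows that $\Id - \mu\B$ and $\Id - \B\mu$ differ by a compact operator, but $\Id - \B\mu$ is not a priori Fredholm either, so this observation alone buys nothing. Likewise, your homotopy $T_s = \Id - s\mu\B$ cannot be run ``by stability under compact perturbations'': the increments $T_s - T_{s'} = (s'-s)\mu\B$ are \emph{not} compact, so the Fredholmness of $T_0 = \Id$ does not propagate to $s>0$. What is missing is a genuine two-sided regularizer. The paper produces one by setting $P_{N-1} = \sum_{j=0}^{N-1}(\mu\B)^j$ and computing
$$(\Id - \mu\B)P_{N-1} = P_{N-1}(\Id - \mu\B) = \Id - (\mu\B)^N = \Id - \mu^N\B^N + K_N,$$
where $K_N = \mu^N\B^N - (\mu\B)^N$ is a finite sum of terms each containing the compact commutator $[\mu,\B]$ as a factor, hence compact. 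The analytic input you are missing is the polynomial bound $\|\B^N\|_{L^p(\omega)} \leq C\,N^2$ on the iterates of the Beurling transform, which follows because $\B^N$ has the explicit Calder\'on--Zygmund kernel $\frac{(-1)^N N}{\pi}\frac{\bar z^{N-1}}{z^{N+1}}$. Combined with $\|\mu\|_\infty<1$ this gives $\|\mu^N\B^N\|\leq C N^2\|\mu\|_\infty^N\to 0$, so $\Id - \mu^N\B^N$ is invertible for $N$ large and $P_{N-1}$ becomes a Fredholm regularizer for $T$. Only once this is established for each $s\mu$ does your homotopy legitimately compute the index.
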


\noindent
The proof copies the scheme of \cite{Iwa}. In particular, our main tool is the following compactness Theorem, which extends a classical result of Uchiyama \cite{Uchi} about commutators of Calder\'on-Zygmund singular integral operators and $VMO$ functions.

\begin{thm}\label{thm:compacidad}
Let $T$ be a Calder\'on-Zygmund singular integral operator. Let $\omega\in A_{p}$ with $1<p<\infty$, and let $b\in VMO(\mathbb{R}^{n})$. The commutator $[b,T]\colon L^{p}(\omega)\to L^{p}(\omega)$ is compact.
\end{thm}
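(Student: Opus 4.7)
My plan is to mimic the classical compactness proof of Uchiyama, replacing each unweighted ingredient by its weighted counterpart. The key structural facts are: (a) the commutator $[b,T]$ is bounded on $L^p(\omega)$ whenever $\omega\in A_p$ and $b\in BMO$, with
$$\|[b,T]\|_{L^p(\omega)\to L^p(\omega)}\leq C(T,p,[\omega]_{A_p})\,\|b\|_{BMO},$$
a well-known weighted extension of the Coifman--Rochberg--Weiss theorem; and (b) $VMO(\R^n)$ is precisely the closure of $C_c^\infty(\R^n)$ in the $BMO$ norm. Granting (a) and (b), the set of $b\in BMO$ for which $[b,T]$ is compact on $L^p(\omega)$ is norm-closed in $BMO$ (since compact operators form a closed subspace of the bounded ones, and (a) says the map $b\mapsto [b,T]$ is continuous into that space). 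Hence it is enough to prove compactness under the additional assumption that $b\in C_c^\infty(\R^n)$.

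For such a $b$, the scheme is the following. I fix a bounded set $\mathcal F\subset L^p(\omega)$ and aim to show that $[b,T]\mathcal F$ is totally bounded. The natural tool is a weighted Fr\'echet--Kolmogorov criterion: a bounded subset of $L^p(\omega)$ is precompact provided
\begin{enumerate}
\item[(i)] $\displaystyle \sup_{f\in\mathcal F}\int_{|x|>R}|[b,T]f(x)|^p\,\omega(x)\,dx \longrightarrow 0$ as $R\to\infty$;
\item[(ii)] $\displaystyle \sup_{f\in\mathcal F}\|\tau_h\bigl([b,T]f\bigr)-[b,T]f\|_{L^p(\omega)}\longrightarrow 0$ as $|h|\to 0$,
\end{enumerate}
where $\tau_h$ denotes translation by $h$. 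That such a criterion holds for $\omega\in A_p$ can be extracted from the standard Kolmogorov--Riesz theorem combined with the local doubling property of $\omega$.

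To verify (i), I use that $b$ is compactly supported, say $\supp b\subset B(0,R_0)$. Then for $|x|$ much larger than $R_0$ one has $[b,T]f(x)=-T(bf)(x)$, and the off-diagonal decay of the CZ kernel, together with $bf\in L^p(\omega)$ being supported in $B(0,R_0)$, forces the tail to vanish uniformly over $\mathcal F$. For (ii), the difference $\tau_h[b,T]f-[b,T]f$ can be split into $(\tau_h b-b)\,Tf$ plus a commutator-type term involving $\tau_h T-T$, i.e.\ essentially $T$ applied to the $h$-difference of $bf$. The first piece is controlled using the Lipschitz regularity of $b$ and the weighted boundedness of $T$, the second one by smoothness of $b$, the CZ kernel estimates and, crucially, a uniform weighted modulus-of-continuity bound for $T$ acting on compactly supported $L^p(\omega)$ functions.

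The most delicate point is precisely this last weighted equicontinuity estimate: in the unweighted case it comes for free from standard singular integral smoothing, but in the weighted setting one has to argue more carefully, localizing $f$ to a large ball (using (i)) and then applying the kernel H\"older regularity together with the fact that $\omega$ is locally doubling. Once this is in hand, conditions (i) and (ii) hold uniformly for $f$ in the unit ball of $L^p(\omega)$, and the Fr\'echet--Kolmogorov criterion yields the compactness of $[b,T]$ for $b\in C_c^\infty$. The general case $b\in VMO$ then follows by the density argument indicated above.
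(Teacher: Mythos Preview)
Your overall scheme matches the paper's: reduce to $b\in C_c^\infty$ via the weighted commutator bound $\|[b,T]\|_{L^p(\omega)\to L^p(\omega)}\leq C\|b\|_{BMO}$, then verify a weighted Fr\'echet--Kolmogorov criterion (uniform boundedness, equicontinuity, uniform decay at infinity) for the image of the unit ball. The paper proves exactly such a criterion (Theorem~\ref{thm:FreKol}), and your handling of the tail condition (i) is essentially the paper's argument.

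The gap is in step (ii). The paper does \emph{not} verify equicontinuity for $[b,T]$ directly. It inserts a second approximation: for each $\eta>0$ one replaces $K$ by a kernel $K^\eta$ that agrees with $K$ on $|x-y|\geq\eta$ and vanishes on $|x-y|\leq\eta/2$ (the Krantz--Li truncation), shows that $\|[b,T]-[b,T^\eta]\|_{L^p(\omega)\to L^p(\omega)}\leq C\eta\|\nabla b\|_\infty$ via a pointwise maximal-function bound, and then proves compactness of $[b,T^\eta]$ for \emph{fixed} $\eta$. The point of the truncation is that $K^\eta$ has no diagonal singularity, so in the difference $K^\eta(x,y)-K^\eta(x+h,y)$ one can separate the region where both $x$ and $x+h$ are at distance $>\eta/2$ from $y$ (there kernel smoothness yields a clean factor $|h|/\eta$ times $Mf(x)$) from two thin transition annuli whose measure is $O(|h|)$; these are then controlled by H\"older and the local integrability of $\omega^{-p'/p}$.

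You skip this truncation and appeal to ``a uniform weighted modulus-of-continuity bound for $T$ acting on compactly supported $L^p(\omega)$ functions''. That bound is not available. First, you cannot localize $f$ using (i): condition (i) controls the tail of $[b,T]f$, not of $f$ itself, so there is no reason the $f$'s in your family are effectively compactly supported. Second, even granting compact support, translations do not act uniformly boundedly on $L^p(\omega)$, and the diagonal singularity of $K$ prevents the naive estimate: the near-diagonal contribution $\int_{|x-y|<2|h|}(b(x+h)-b(y))K(x+h,y)f(y)\,dy$ produces terms like $|h|\,Mf(x+h)$ whose $L^p(\omega)$ norm is $\|Mf\|_{L^p(\tau_{-h}\omega)}$, and although $[\tau_{-h}\omega]_{A_p}=[\omega]_{A_p}$, this only gives control by $\|f\|_{L^p(\tau_{-h}\omega)}$, not by $\|f\|_{L^p(\omega)}$. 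The Krantz--Li truncation is precisely what removes this obstruction, by pushing all estimates to the fixed scale $\eta$ and expressing everything through $Mf(x)$ and $T_\ast f(x)$ evaluated at $x$ rather than at $x+h$.
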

\noindent
Theorem \ref{thm:compacidad} is obtained from a sufficient condition for compactness in $L^p(\omega)$. When $\omega=1$, this sufficient condition reduces to the classical Frechet-Kolmogorov compactness criterion. Theorem \ref{main} is then obtained from Theorem \ref{thm:compacidad} by letting $T$ be the Beurling-Ahlfors singular integral operator. \\
\\
A counterpart to Theorem \ref{main} for the \emph{generalized Beltrami equation},
\begin{equation}\label{generalized}
\overline\partial f(z)-\mu(z)\,\partial f(z)-\nu(z)\,\overline{\partial f(z)}=g(z), 
\end{equation}
can also be obtained under the ellipticity condition $\||\mu|+|\nu|\|_\infty\leq k<1$ and the $VMO$ smoothness of the coefficients (see Theorem \ref{conjug} below). Theorem \ref{thm:compacidad} is again the main ingredient. However, for \eqref{generalized} the argument in Theorem \ref{main} needs to be modified, because the involved operators are not $\C$-linear, but only $\R$-linear. In other words, $\C$-linearity is not essential. See also \cite{Kos}. \\
\\
It turns out that any linear, elliptic, divergence type equation can be reduced to equation \eqref{generalized} (see e.g.\cite[Theorem 16.1.6]{AsIwMa}). Therefore the following result is no surprise. 

\begin{coro}
Let $K\geq 1$. Let $A:\R^{2}\to\R^{2\times 2}$ be a matrix-valued function, satisfying the ellipticity condition
$$\frac{1}{K}\leq v^t\,A(z)\,v\leq K,\hspace{2cm}\text{ whenever }v\in \R^2,\,|v|=1$$
at almost every point $z\in \R^2$, and such that $A-\Id$ has compactly supported $VMO$ entries. Let $p\in (1,\infty)$ be fixed, and $\omega\in A_p$. For any $g\in L^p(\omega)$, the equation
$$\div (A(z)\,\nabla u)=\div (g)$$
has a solution $u$ with $\nabla u\in L^p(\omega)$, unique  up to an additive constant, and such that
$$\|\nabla u\|_{L^p(\omega)}\leq C\,\|g\|_{L^p(\omega)}$$
for some constant $C=C(A,\omega,p)$. 
\end{coro}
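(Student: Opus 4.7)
My plan is to reduce the divergence-form equation to the generalized Beltrami equation \eqref{generalized} via the classical correspondence between planar second-order divergence-type equations and first-order complex systems (see \cite[Theorem 16.1.6]{AsIwMa}), and then apply Theorem \ref{conjug}. Given $g\in L^p(\omega)$ and a candidate solution $u$, the vector field $A\nabla u-g$ is divergence-free, hence the $\pi/2$ rotation of a gradient: there exists $v$ with $\nabla v=J(A\nabla u-g)$, where $J$ is the counter-clockwise $\pi/2$ rotation. Setting $f=u+iv$ and rewriting this identity through the formulas for $\partial f$ and $\overline\partial f$, one obtains an $\R$-linear equation
\[
\overline\partial f(z)-\mu(z)\,\partial f(z)-\nu(z)\,\overline{\partial f(z)}=G(z),
\]
where $\mu,\nu$ are explicit rational functions of the entries of $A$ and $G$ is an $\R$-linear expression in $g$. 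The ellipticity bound $K^{-1}\leq v^tAv\leq K$ translates into $|\mu(z)|+|\nu(z)|\leq k:=(K-1)/(K+1)<1$, and $|G(z)|\leq C(K)|g(z)|$.

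Next, I verify that $\mu,\nu$ satisfy the hypotheses of Theorem \ref{conjug}. The denominators in the formulas for $\mu,\nu$ are bounded below uniformly in $z$ by ellipticity, and the numerators vanish when $A=\Id$. Therefore $\mu$ and $\nu$ are supported inside $\supp(A-\Id)$, hence are compactly supported, and bounded. Since $VMO(\C)\cap L^\infty(\C)$ is an algebra stable under composition with smooth functions whose denominators stay away from zero on the range of the arguments, the compactly supported $VMO$ hypothesis on the entries of $A-\Id$ forces $\mu,\nu\in VMO(\C)\cap L^\infty(\C)$ with compact support.

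With $G\in L^p(\omega)$ and $\|G\|_{L^p(\omega)}\leq C\|g\|_{L^p(\omega)}$, Theorem \ref{conjug} produces a solution $f$ of the generalized Beltrami equation, unique modulo additive constants, with $Df\in L^p(\omega)$ and
\[
\|Df\|_{L^p(\omega)}\leq C\,\|G\|_{L^p(\omega)}\leq C(A,p,[\omega]_{A_p})\,\|g\|_{L^p(\omega)}.
\]
Setting $u:=\Re f$ and $v:=\Im f$, reversing the algebraic manipulation recovers $\nabla v=J(A\nabla u-g)$, which by the identity $v_{xy}=v_{yx}$ implies $\div(A\nabla u)=\div(g)$, and clearly $\|\nabla u\|_{L^p(\omega)}\leq \|Df\|_{L^p(\omega)}$. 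For uniqueness, two $L^p(\omega)$-gradient solutions $u_1,u_2$ differ by an $A$-harmonic $u$; the complex $f$ built from $u$ and its stream function solves the homogeneous Beltrami equation with $Df\in L^p(\omega)$, hence is constant by Theorem \ref{conjug}, and so is $u$.

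The conceptual work lies in two places. The algebraic reduction to the Beltrami equation is classical, and I would simply record the explicit formulas for $\mu,\nu,G$ in terms of the entries of $A$ and the components of $g$. The main technical point is then transferring the structural assumption: I must check that compactly supported $VMO\cap L^\infty$ entries of $A-\Id$ produce compactly supported $VMO\cap L^\infty$ coefficients $\mu,\nu$. This is routine once one exploits the compact support (which reduces the $BMO$ estimate to a bounded region) and the smoothness of the rational expressions in play, but it is the precise step at which the hypotheses of the corollary are consumed.
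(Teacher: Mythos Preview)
Your proposal is correct and follows precisely the route the paper indicates: the paper does not give a detailed proof of this corollary, but simply points to the classical reduction of planar divergence-type equations to the generalized Beltrami equation via \cite[Theorem 16.1.6]{AsIwMa}, after which Theorem \ref{conjug} applies. Your write-up fleshes out exactly this argument, including the verification that the ellipticity and compactly supported $VMO$ hypotheses on $A-\Id$ transfer to the Beltrami coefficients $\mu,\nu$, which is the one point the paper leaves entirely implicit.
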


\noindent
Other applications of Theorem \ref{main} are found in connection to planar $K$-quasiconformal mappings. Remember that a $W^{1,2}_{loc}$ homeomorphism $\phi:\Omega\to\Omega'$ between domains $\Omega,\Omega'\subset\C$ is called $K$-quasiconformal if 
$$|\overline\partial \phi(z)|\leq \frac{K-1}{K+1}\,|\partial \phi(z)|\hspace{2cm}\text{for }a.e.z\in\Omega.$$
In general, jacobians of $K$-quasiconformal maps are Muckenhoupt weights belonging to the class $A_p$ for any $p>K$ (see \cite[Theorem 14.3.2 ]{AsIwMa}, or also \cite{AstIwaSak}), and this is sharp. As a consequence of Theorem \ref{main}, we obtain the following improvement. 

\begin{coro}
Let $\mu\in VMO$ be compactly supported, such that $\|\mu\|_\infty<1$, and let $\phi$ be the principal solution of 
$$\overline\partial\phi(z)-\mu(z)\,\partial\phi(z)=0.$$ 
Then the jacobian determinant $J(\cdot,\phi^{-1})$ belongs to $A_p$ for every $1<p<\infty$. 
\end{coro}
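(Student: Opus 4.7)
The plan is to reduce the $A_p$ condition for the weight $\omega := J(\cdot,\phi^{-1})$ to a reverse H\"older inequality for $J(\cdot,\phi)$, and then establish this inequality by combining Theorem \ref{main} with the $VMO$ hypothesis on $\mu$.

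First, I would reformulate the problem. A direct change of variables $z=\phi(w)$, together with the identity $J(\phi(w),\phi^{-1})\,J(w,\phi)=1$, rewrites the Muckenhoupt condition $[\omega]_{A_p}<\infty$ as the requirement that there exist $C>0$ such that for every ball $B\subset\C$, setting $E:=\phi^{-1}(B)$,
$$\left(\frac{1}{|E|}\int_E J(\cdot,\phi)^{p'}\right)^{1/p'} \leq \frac{C}{|E|}\int_E J(\cdot,\phi).$$
In other words, $J(\cdot,\phi)$ satisfies a reverse H\"older inequality of order $p'$ over the family of preimages $E=\phi^{-1}(B)$. Since $\phi$ is $K$-quasiconformal, each such $E$ has eccentricity bounded by a constant depending only on $K$, so this is equivalent up to constants to the usual reverse H\"older inequality on ordinary balls.

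Next, I would apply Theorem \ref{main} with trivial weight and arbitrary $q\in(1,\infty)$. Writing $\partial\phi - 1 = B(\mu\,\partial\phi)$, where $B$ is the Beurling transform, and inverting $I-\mu B$ on $L^q(\C)$ yields $\partial\phi\in L^q_{loc}$ for every $q<\infty$, with quantitative bounds depending only on $q$, $\|\mu\|_\infty$ and the $VMO$ modulus of $\mu$. Hence $J(\cdot,\phi)\in L^{q/2}_{loc}$ for every $q<\infty$. Since $\mu$ is compactly supported, $\phi$ is conformal off a bounded set and $J(\cdot,\phi)\to 1$ at infinity, so the reverse H\"older question localizes to a fixed compact region.

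To upgrade this $L^q$ integrability to the uniform reverse H\"older inequality of all orders, I would exploit the $VMO$ smoothness of $\mu$: on small balls the $BMO$-oscillation of $\mu$ is small, which via Theorem \ref{main} (applied now with $A_q$ weights adapted to the scale) forces $J(\cdot,\phi)$ to be close to a constant in a controlled $L^{p'}$ sense. Combined with the large-scale control from the compactness of $\supp\mu$, this yields the reverse H\"older inequality uniformly in $B$. The main obstacle is precisely this last step: passing from mere $L^q$ integrability to a uniform reverse H\"older is where the full strength of Theorem \ref{main}, giving weighted bounds for \emph{every} $A_p$ weight rather than just Lebesgue measure, becomes essential. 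Without the $VMO$ hypothesis one only obtains reverse H\"older up to a finite exponent depending on $\|\mu\|_\infty$, which corresponds to the standard $K$-quasiconformal statement $J(\cdot,\phi^{-1})\in A_p$ for $p>K$.
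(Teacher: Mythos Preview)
Your reformulation in terms of a reverse H\"older inequality and the higher integrability $J(\cdot,\phi)\in L^q_{loc}$ for all $q<\infty$ are both correct, but Step 3 is not a proof --- it is a description of what remains to be done, and you yourself flag it as ``the main obstacle'' without actually overcoming it. Mere $L^q$-integrability of $J(\cdot,\phi)$ for every finite $q$ does not by itself yield a \emph{uniform} reverse H\"older inequality of any order; the reverse H\"older constant is a scale-uniform statement, and your sketch of how to obtain it (``$A_q$ weights adapted to the scale'', ``close to a constant in a controlled $L^{p'}$ sense'') never becomes concrete. You do not say which weights to plug into Theorem~\ref{main}, nor how the resulting estimate for the inhomogeneous equation (with data $g=\mu$ after writing $\phi(z)-z$ as the unknown) converts into a reverse H\"older bound for the jacobian. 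As written, this step is a genuine gap.

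The paper sidesteps this difficulty entirely by a different route. Rather than attacking $J(\cdot,\phi^{-1})\in A_p$ directly, it shows via a change-of-variables lemma (Lemma~\ref{conj4}) that the weighted Beltrami estimate of Theorem~\ref{main} in $L^p(\omega)$ is \emph{equivalent} to the weighted $\overline\partial$-estimate $\|Dg\|_{L^p(\eta)}\le C\|\overline\partial g\|_{L^p(\eta)}$ for the transferred weight $\eta=(\omega\circ\phi^{-1})\,J(\cdot,\phi^{-1})^{1-p/2}$. Since boundedness of the Beurling transform on $L^p(\eta)$ forces $\eta\in A_p$, this gives Corollary~\ref{corol}: $\omega\in A_p\Rightarrow\eta\in A_p$. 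Specializing to $p=2$ shows that composition with $\phi^{-1}$ preserves $A_2$ (Corollary~\ref{weightcomp}). The desired conclusion $J(\cdot,\phi^{-1})\in\bigcap_{p>1}A_p$ then follows at once from the Johnson--Neugebauer characterization (Theorem~\ref{thm:gog}), which says that a quasiconformal map preserves $A_2$ under composition if and only if its jacobian lies in every $A_p$. Thus the paper uses the full strength of Theorem~\ref{main} --- estimates for \emph{all} $A_2$ weights simultaneously, quantitatively in $[\omega]_{A_2}$ --- in one stroke, rather than trying to manufacture a reverse H\"older inequality scale by scale.
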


\noindent
We actually prove that composition with the inverse mapping $\phi^{-1}$ preserves the Muckenhoupt class $A_2$. Then, the above corollary follows by the results of Johnson and Neugebauer in \cite{JohNeu}, where the composition problem in all Muckenhoupt classes is completely solved. \\
\\
%One further application is given by the a priori $L^p(\omega)$ estimates for the \emph{reduced Beltrami equation},
%$$\overline\partial f(z)-\lambda(z)\,\Im(\partial f(z))=g(z),$$
%see Corollary \ref{reducedapriori} below. This equation arises naturally in connection with the Stoilow factorization for the generalized $\R$-linear Beltrami equation, and has recently been object of several investigations (see \cite{AstJaa} and the references therein).\\
%\\
The paper is structured as follows. In Section \ref{FreKol} we prove Theorem \ref{thm:compacidad}. In Section \ref{Est} we prove Theorem \ref{main} and its counterpart for the generalized Beltrami equation. In Section \ref{BMOconn} we study some applications. By $C$ we denote a positive constant that may change at each occurrence. $B(x,r)$ denotes the open ball with center $x$ and radius $r$, and $2B$ means the open ball concentric with $B$ and having double radius.

\section{Compactness}\label{FreKol}

By singular integral operator $T$, we mean a linear operator on $L^p(\mathbb R^n)$ that can be written as
$$
Tf(x)=\int_{\mathbb R^n}f(y)\,K(x,y)\,\mathrm{d}y.
$$
Here $K\colon \mathbb R^n\times\mathbb R^n\setminus\{x=y\}\to \mathbb C$ obeys the bounds 
\begin{enumerate}
\item $|K(x,y)|\leq \frac{C}{|x-y|^n}$,
\item $|K(x,y)-K(x,y')|\leq C\frac{|y-y'|}{|x-y|^{n+1}}\quad\text{whenever }|x-y|\geq2|y-y'|,$
\item $|K(x,y)-K(x',y)|\leq C\frac{|x-x'|}{|x-y|^{n+1}}\quad\text{whenever }|x-y|\geq2|x-x'|.$
\end{enumerate}
Given a singular integral operator $T$, we define the \emph{truncated singular integral } as 
$$
T_{\epsilon}f(x)=\int_{|x-y|\geq\epsilon}K(x,y)f(y)\mathrm{d}y
$$
and the \emph{maximal singular integral} by the relationship 
$$
T_\ast f(x)=\sup_{\epsilon>0}\left|T_{\epsilon}f(x)\right|.
$$
As usually, we denote $\fint_E f(x)\mathrm{d}x=\frac{1}{|E|}\int_E f(x)\,\mathrm{d}x$.  A weight is a function $\omega\in L_{loc}^{1}(\mathbb{R}^{n})$  such that $\omega(x)>0$ almost everywhere. A weight $\omega$ is said to belong to the Muckenhoupt class $A_p$, $1<p<\infty$, if 
$$
[\omega]_{A_{p}}:=\sup\left(\fint_{Q}\omega(x)\mathrm{d}x\right)\left(\fint_{Q}\omega(x)^{-\frac{p'}{p}}\mathrm{d}x\right)^{\frac{p}{p'}}<\infty,$$
where the supremum is taken over all cubes $Q\subset\mathbb{R}^{n}$, and where $\frac{1}{p}+\frac{1}{p'}=1$. By $L^p(\omega)$ we denote the set of measurable functions $f$ that satisfy 
\begin{equation}
\|f\|_{L^p(\omega)}=\left(\int_{\mathbb{R}^{n}}|f(x)|^{p}\omega(x)\mathrm{d}x\right)^{\frac{1}{p}}<\infty.\label{eq:tat}
\end{equation}
The quantity  $\|f\|_{L^{p}(\omega)}$ defines a complete norm in  $L^{p}(\omega)$. It is well know that if $T$ is a Calder\'on-Zygmund operator, then $T$ and also $T_\ast$ are bounded in $L^{p}(\omega)$ whenever $\omega\in A_p$  (see for instance \cite[Cap. IV, Theorems 3.1 and 3.6]{GarRub}). Also the Hardy-Littlewood maximal operator $M$ is bounded in $L^p$. For more about $A_p$ classes and weighted spaces $L^p(\omega)$, we refer the reader to \cite{GarRub}. \\
\\
We first show the following sufficient condition for compactness in $L^{p}(\omega)$, $\omega\in A_p$. Remember that an metric space $X$ is \emph{totally bounded} if for every $\epsilon>0$ there exists a finite number of open balls of radius $\epsilon$ whose union is the space $X$. In addition, a metric space is compact if and only if is complete and totally bounded. 

\begin{thm}\label{thm:FreKol}

Let $p\in(1,\infty)$, $\omega\in A_{p}$, and let $\mathfrak{F\subset}L^{p}(\omega)$. Then $\mathfrak{F}$ is totally bounded if it satisfies the next three conditions: 
\begin{enumerate}
\item $\mathfrak{F}$ is uniformly bounded, i.e. $\sup_{f\in\mathfrak{F}}\|f\|_{L^p(\omega)}<\infty$. \label{unif} 
\item $\mathfrak{F}$ is uniformly equicontinuous, i.e.\label{equic}
$\sup_{f\in\mathfrak{F}}\|f(\cdot+h)-f(\cdot)\|_{L^{p}(\omega)}\xrightarrow{h\to0}0.$
\item $\mathfrak{F}$ uniformly vanishes at infinity,\label{decae} i.e. $\sup_{f\in\mathfrak{F}}\|f-\chi_{Q(0,R)}f\|_{L^{p}(\omega)}\xrightarrow{R\to\infty}0$, where $Q(0,R)$ is the cube with center at the origin and sidelength $R$. 
\end{enumerate}
\end{thm}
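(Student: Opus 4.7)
The plan is to adapt the classical Frechet–Kolmogorov argument to the weighted setting. Fix $\varepsilon>0$; the goal is to produce a finite $\varepsilon$-net for $\mathfrak{F}$ in $L^p(\omega)$. I will reduce $\mathfrak{F}$ to a family of smooth, compactly supported functions on which classical Arzel\`a–Ascoli applies, and then translate the resulting $C$-total boundedness into $L^p(\omega)$-total boundedness using the local integrability of $\omega$.

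First, I would fix a standard mollifier: take $\phi\in C_c^\infty(\R^n)$ with $\int \phi=1$ and $\supp \phi\subset B(0,1)$, and set $\phi_\delta(x)=\delta^{-n}\phi(x/\delta)$. Writing $f\ast\phi_\delta(x)-f(x)=\int \phi_\delta(y)(f(x-y)-f(x))\,dy$ and invoking Minkowski's integral inequality in $L^p(\omega)$, I get
$$\|f\ast\phi_\delta-f\|_{L^p(\omega)}\leq \sup_{|y|\leq \delta}\|f(\cdot-y)-f(\cdot)\|_{L^p(\omega)},$$
which by hypothesis (\ref{equic}) is small, uniformly in $f\in\mathfrak{F}$, for $\delta$ small. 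Combined with hypothesis (\ref{decae}) and the triangle inequality, this lets me choose $\delta$ small and $R$ large so that $\|f-(f\ast\phi_\delta)\chi_{Q(0,R)}\|_{L^p(\omega)}<\varepsilon/2$ for every $f\in\mathfrak{F}$.

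Next, I would show that $\mathfrak{G}:=\{(f\ast\phi_\delta)\chi_{Q(0,R)}:f\in\mathfrak{F}\}$ is totally bounded in $L^p(\omega)$. The key is that, because $\omega\in A_p$, we have $\omega^{-p'/p}\in L^1_{loc}$, so by H\"older's inequality with exponents $p,p'$, uniform boundedness of $\mathfrak{F}$ in $L^p(\omega)$ yields uniform boundedness in $L^1_{loc}$. Writing $f\ast\phi_\delta(x)=\int \phi_\delta(x-y)f(y)\,dy$ and bounding $\phi_\delta$ (resp.\ $\nabla\phi_\delta$) in $L^\infty$, I obtain uniform $L^\infty$ and Lipschitz bounds for $f\ast\phi_\delta$ on any fixed compact set. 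Arzel\`a–Ascoli then gives that the restrictions $\{f\ast\phi_\delta|_{\overline{Q(0,R)}}\}$ are totally bounded in $C(\overline{Q(0,R)})$. Since $\omega\in L^1_{loc}$, $\omega(Q(0,R))<\infty$, and uniform convergence on $\overline{Q(0,R)}$ implies convergence in $L^p(Q(0,R),\omega)$; hence $\mathfrak{G}$ is totally bounded in $L^p(\omega)$.

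Picking a finite $\varepsilon/2$-net in $L^p(\omega)$ for $\mathfrak{G}$ and combining with the previous approximation yields the desired finite $\varepsilon$-net for $\mathfrak{F}$. The main obstacle I anticipate is the subtle interplay between the mollification and the weight: a priori $\mathfrak{F}\subset L^p(\omega)$ need not sit inside $L^p$ or $L^1_{loc}$, so the classical mollifier estimates do not apply verbatim. The fact that $A_p$ weights satisfy $\omega^{-p'/p}\in L^1_{loc}$, together with Minkowski's integral inequality in the weighted norm, is precisely what bridges this gap and lets the classical compactness scheme go through.
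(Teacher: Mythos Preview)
Your argument is correct. It follows the classical Frechet--Kolmogorov route (mollify, truncate, apply Arzel\`a--Ascoli) and uses the $A_p$ condition exactly where it is needed: first, $\omega^{-p'/p}\in L^1_{loc}$ gives $\mathfrak{F}\subset L^1_{loc}$, which makes $f\ast\phi_\delta$ well defined and gives the uniform $C^1$ bounds on compact sets; second, $\omega\in L^1_{loc}$ converts uniform closeness on $\overline{Q(0,R)}$ into $L^p(\omega)$ closeness. The Minkowski step for $\|f\ast\phi_\delta-f\|_{L^p(\omega)}$ is valid in any weighted $L^p$ space, and your decomposition $\|f-(f\ast\phi_\delta)\chi_{Q(0,R)}\|\le\|f(1-\chi_{Q(0,R)})\|+\|f-f\ast\phi_\delta\|$ handles the two hypotheses independently.

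The paper proceeds differently. Rather than mollifying, it invokes an abstract lemma (from Hanche-Olsen and Holden): $X$ is totally bounded if for every $\epsilon$ there is a map $\Phi$ into some metric space with $\Phi(X)$ totally bounded and $d(\Phi x,\Phi y)<\delta\Rightarrow d(x,y)<\epsilon$. The map $\Phi$ is taken to be averaging over a fine grid of cubes $Q_i$ covering $Q(0,R)$, so $\Phi(\mathfrak{F})$ lands in a finite-dimensional space and is automatically totally bounded; the $A_p$ condition is used to show $\Phi$ is bounded on $L^p(\omega)$, and the equicontinuity hypothesis controls $\|f\chi_{Q(0,R)}-\Phi f\|_{L^p(\omega)}$ via a change of variables. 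In effect the paper replaces your mollifier by a piecewise-constant projection and your Arzel\`a--Ascoli step by finite-dimensionality. Your approach is arguably more self-contained (no auxiliary lemma needed), while the paper's has the minor advantage that the approximating family is explicitly finite-dimensional from the outset, avoiding any appeal to sequential compactness in $C(\overline{Q(0,R)})$.
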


\noindent
Let us emphasize that Theorem \ref{thm:FreKol} is a strong sufficient condition for compactness in $L^p(\omega)$, because for a general weight $\omega\in A_p$ the space $L^p(\omega)$ is not invariant under translations. Theorem \ref{thm:FreKol} is proved by adapting the arguments in \cite{HanHol}. In particular, the following result (which can be found in \cite[Lemma 1]{HanHol}) is essential.

\begin{lema}
Let $X$ be a metric space. Suppose that for every $\epsilon>0$ one can find a number $\delta>0$, a metric space $W$ and an application $\Phi\colon X\to W$ such that $\Phi(X)$ is totally bounded, and the implication
$$d(\Phi(x),\Phi(y))<\delta\hspace{1cm}\Longrightarrow\hspace{1cm}d(x,y)<\epsilon$$
holds for any $x,y\in X$. Then $X$ is totally bounded.
\end{lema}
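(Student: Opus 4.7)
The plan is to verify directly the definition of total boundedness for $X$: fix $\epsilon>0$, and exhibit a finite $\epsilon$-net.

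First I would invoke the hypothesis to get the associated $\delta>0$, the space $W$, and the map $\Phi\colon X\to W$, with $\Phi(X)$ totally bounded in $W$ and the stated implication holding. Since $\Phi(X)$ is totally bounded, there exist points $w_1,\ldots,w_N\in W$ whose $\delta/2$-balls cover $\Phi(X)$. After discarding those $w_i$ whose ball misses $\Phi(X)$, and choosing for each remaining index some $x_i\in X$ with $d(\Phi(x_i),w_i)<\delta/2$, the triangle inequality yields
$$\Phi(X)\subseteq\bigcup_{i=1}^{N}B\bigl(\Phi(x_i),\delta\bigr).$$
This replaces the original $\delta/2$-cover in $W$ by a $\delta$-cover whose centers come from $\Phi(X)$, which is what allows the hypothesis to be applied in the next step.

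The verification of the $\epsilon$-net property in $X$ is then immediate. For any $x\in X$ there exists $i$ with $d(\Phi(x),\Phi(x_i))<\delta$, and the hypothesis forces $d(x,x_i)<\epsilon$. Thus $\{x_1,\ldots,x_N\}$ is a finite $\epsilon$-net for $X$, and since $\epsilon>0$ was arbitrary, $X$ is totally bounded.

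The only subtlety is the elementary preprocessing that ensures the centers of the cover of $\Phi(X)$ can be taken in the form $\Phi(x_i)$ with $x_i\in X$; without this, the implication in the hypothesis cannot be triggered since $\Phi$ need not be surjective onto $W$. Beyond this observation, the argument is a direct unpacking of the definition of totally bounded, and no further obstacle arises.
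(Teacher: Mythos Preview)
Your proof is correct. The paper does not actually give its own proof of this lemma: it is stated with a citation to \cite[Lemma 1]{HanHol} and used as a black box in the proof of Theorem \ref{thm:FreKol}, so there is no in-paper argument to compare against; your direct verification of the definition of total boundedness, with the standard re-centering trick to ensure the covering balls have centers in $\Phi(X)$, is exactly the expected elementary argument.
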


\begin{proof}[Proof of Theorem \ref{thm:FreKol}]
Suppose that the family $\mathfrak{F}$ satisfies the three conditions of the Theorem \ref{thm:FreKol}. Given $\rho\geq0$, let $Q$ be the largest open cube centered at $0$ such that $2Q\subset B(0,\rho)$.
For $R>0$, let $Q_{1},\ldots,Q_{N}$ be $N$ copies of $Q$ such that have not a overlap and such that
\[
\overline{Q(0,R)}=\overline{\bigcup_{i}Q_{i}},
\]
where $Q(0,R)$ is the open cube on the origin of side $R$. Let us define an application $f\mapsto \Phi f$ by setting
\begin{equation}\label{defP}
\Phi f(x)  =  \begin{cases}
\displaystyle\fint_{Q_{i}}f(z)\mathrm{d}z, & x\in Q_{i},i=1,\ldots,N,%\nonumber
\\
0 & \text{otherwise.}\end{cases}\\
 %& =  \sum_{i=1}^{N}\chi_{Q_{i}}(x)\frac{1}{|Q_{i}|}\int_{Q_{i}}f(z)\mathrm{d}z.
 \end{equation}
For $f\in L^{p}(\omega)$ one has $f\in L_{loc}^{1}(\mathbb{R}^{n})$, and thus $\Phi f$ is well defined for $f\in \mathfrak{F}$. Moreover, 
$$
\aligned
\int_{\mathbb{R}^{n}}|\Phi f(x)|^{p}\omega(x)\mathrm{d}x
&=\sum_{i=1}^N \left|\fint_{Q_i}f(z)\mathrm{d}z\right|^p\,\int_{Q_i}\omega(x)\,\mathrm{d}(x)\\
%&=\int_{\cup_{i=1}^{n}Q_{i}}\left|\frac{1}{|Q_{i}|}\int_{Q_{i}}f(z)\mathrm{d}z\right|^{p}\omega(x)\mathrm{d}x\nonumber \\
%&= \sum_{i=1}^{N}\frac{1}{|Q_{i}|^{p}}\left(\int_{Q_{i}}\omega(x)\mathrm{d}x\right)\left|\int_{Q_{i}}\frac{f(z)\omega^{\frac{1}{p}}(z)}{\omega^{\frac{1}{p}}(z)}\mathrm{d}z\right|^{p}\nonumber \\
& \leq  \sum_{i=1}^{N}\left(\fint_{Q_{i}}|f(z)|^{p}\omega(z)\mathrm{d}z\right)\left(\fint_{Q_{i}}\omega^{\frac{-p'}{p}}(z)\mathrm{d}z\right)^{\frac{p}{p'}} \int_{Q_{i}}\omega(x)\mathrm{d}x  \\
& \leq  [\omega]_{A_{p}}\|f\|_{L^{p}(\omega)}^{p}.
\endaligned
$$
In particular, $\Phi:L^p(\omega)\to L^p(\omega)$ is a bounded operator. As $\mathfrak{F}$ is bounded, then $\Phi(\mathfrak{F})$ is a bounded subset of a finite dimensional Banach space, and hence $\Phi(\mathfrak{F})$ is totally bounded. \\%Now, if $f\in\mathfrak{F}$,  
 %\begin{eqnarray*}
%\|f-Pf\|_{L^{p}(\omega)} & \leq & \|f-f\chi_{Q(0,R)}\|_{L^{p}(\omega)}+\|f\chi_{Q(0,R)}-Pf\|_{L^{p}(\omega)}.
%\end{eqnarray*}
By the vanishing condition at infinity, given $\epsilon>0$ there exists $R_0>0$ such that 
\begin{equation}
\sup_{f\in\mathfrak{F}}\|f-\chi_{Q(0,R)}f\|_{L^{p}(\omega)}<\frac{\epsilon}{4},\qquad\text{if }R>R_{0}.\label{eq:nec22}
\end{equation}
On the other hand, by Jensen's inequality, 
\begin{eqnarray*}
\|f\chi_{Q(0,R)}-\Phi f\|_{L^{p}(\omega)}^{p}& = & \sum_{i=1}^N\int_{Q_{i}}\left|f(x)-\fint_{Q_{i}}f(z)\mathrm{d}z\right|^{p}\omega(x)\mathrm{d}x\\
% & = & \sum_{i=1}^{N}\int_{Q_{i}}\left|\frac{1}{|Q_{i}|}\int_{Q_{i}}(f(x)-f(z))\mathrm{d}z\right|^{p}\omega(x)\mathrm{d}x\\
& \leq & \sum_{i=1}^{N}\frac{1}{|Q_i|}\int_{Q_{i}}\int_{Q_{i}}|f(x)-f(z)|^{p}\mathrm{d}z\:\omega(x)\mathrm{d}x
 \end{eqnarray*}
Now, if $x,z\in Q_{i}$, then $z-x=h\in2Q\subset B(0,\rho)$. Therefore, after a change of coordinates,
$$
\aligned
\|f\chi_{Q(0,R)}-\Phi f\|_{L^{p}(\omega)}^{p} 
& \leq \sum_{i=1}^{N}\frac{1}{|Q_i|}\int_{Q_{i}}\int_{2Q}|f(x)-f(x+h)|^{p}\mathrm{d}h\:\omega(x)\mathrm{d}x\nonumber \\
& =  \frac{1}{|Q|}\int_{2Q}\sum_{i=1}^{N} \int_{Q_{i}}|f(x)-f(x+h)|^{p}\omega(x)\mathrm{d}x\mathrm{\: d}h\nonumber \\
& \leq\frac{1}{|Q|}\int_{2Q}\int_{\mathbb{R}^{n}}|f(x)-f(x+h)|^{p}\omega(x)\mathrm{d}x\:\mathrm{d}h\nonumber 
 %& \leq & \sup_{h\in2Q}\|f(\cdot)-f(\cdot+h)\|_{L^{p}(\omega)}^{p}\left(\frac{1}{|Q|}\int_{2Q}\mathrm{d}h\right)\nonumber \\
\\&\leq 2^{n}\,\sup_{|h|\leq \rho}\left(\sup_{f\in\mathfrak{F}}\|f(\cdot)-f(\cdot+h)\|_{L^{p}(\omega)}^{p}\right).%\label{eq:nec2}
 \endaligned
 $$
Therefore, by the uniform equicontinuity, we can find $\rho>0$ small enough, such that
 \begin{equation}
\sup_{f\in\mathfrak{F}}\|f\chi_{Q(0,R)}-\Phi f\|_{L^{p}(\omega)}<\frac{\epsilon}{4}.\label{eq:nec23}
\end{equation}
By (\ref{eq:nec22}) and (\ref{eq:nec23}) we have that
$$
\sup_{f\in\mathfrak{F}}\|f-\Phi f\|_{L^{p}(\omega)}<\frac{\epsilon}{2},
$$
whence
\begin{equation}
\|f\|_{L^{p}(\omega)}<\frac{\epsilon}{2}+\|\Phi f\|_{L^{p}(\omega)},\hspace{1cm}\text{ whenever }f\in\mathfrak{F}.\label{eq:per3}
\end{equation}
Since $\Phi$ is linear, this means that
$$\|f-g\|_{L^{p}(\omega)}<\frac{\epsilon}{2}+\|\Phi f-\Phi g\|_{L^{p}(\omega)},\hspace{1cm}\text{ whenever }f,g\in\mathfrak{F}.\label{eq:per3}
$$
Set $\delta=\epsilon/2$. The above inequality says that if $f,g\in \mathfrak{F}$ are such that $d(\Phi f,\Phi g)<\delta$, then $d(f,g)<\epsilon$. By the previous Lemma, it follows that $\mathfrak{F}$ is totally bounded.
\end{proof}
\noindent
In order to prove Theorem \ref{thm:compacidad}, we will first reduce ourselves to smooth symbols $b$. Let us recall that commutators $C_{b}=[b,T]$ with $b\in BMO(\mathbb{R}^{n})$ are continuous in $L^{p}(\omega)$ (\emph{e. g.} Theorem 2.3 in \cite{SegTor}). Moreover, in \cite[Theorem 1]{Per} the following estimate is shown, 
\begin{equation}
\|C_{b}f\|_{L^{p}(\omega)}\leq C\,\|b\|_{*}\,\|M^{2}f\|_{L^{p}(\omega)},\label{eq:Perez}
\end{equation}
where $C$ may depend on $\omega$, but not on $b$. Now, by the boudedness of the Hardy-Littlewood operator $M$ on $L^p(\omega)$, we obtain
$$\|C_{b}f\|_{L^{p}(\omega)}\leq C\,\|b\|_\ast\,\|f\|_{L^p(\omega)}.$$
Since by assumption $b\in VMO(\R^n)$, we can approximate the function $b$ by functions $b_{j}\in {\mathcal C}_{c}^{\infty}(\mathbb{R}^{n})$ in the $BMO$ norm, and thus
$$
\|C_{b}f-C_{b_{j}}f\|_{L^{p}(\omega)}=\|C_{b-b_{j}}f\|_{L^{p}(\omega)}\leq C\,\|b-b_j\|_\ast\,\|f\|_{L^p(\omega)}.
$$
In particular, the commutators with smooth symbol $C_{b_j}$ converge to $C_b$ in the operator norm of $L^p(\omega)$. Therefore it suffices to prove compactness for the commutator with smooth symbol. \\
\\
Another reduction in the proof of Theorem \ref{thm:compacidad} will be made by slightly modifying the singular integral operator $T$. This technique comes from Krantz and Li \cite{KraLi2}. More precisely, for every $\eta>0$ small enough, let us take a continuous function $K^\eta$ defined on $\mathbb{R}^{n}\times\mathbb{R}^{n}$, taking values in $\R$ or $\mathbb{C}$, and such that:
\begin{enumerate}
\item $K^{\eta}(x,y)=K(x,y)$ if $|x-y|\geq\eta$ 
\item $|K^{\eta}(x,y)|\leq\frac{C_{0}}{|x-y|^{n}}$ for $\frac{\eta}{2}<|x-y|<\eta$
\label{enu:-para-2} 
\item $K^{\eta}(x,y)=0$ si $|x-y|\leq\frac{\eta}{2}$ 
\end{enumerate}
where $C_{0}$ is independent of $\eta$. Due to the growth properties of $K$, is not restrictive to suppose that the condition \ref{enu:-para-2}
holds for all $x,y\in\mathbb{R}^{n}$. Now, let
$$T^{\eta}f(x)=\int_{\mathbb{R}^{n}}K^{\eta}(x,y)f(y)\mathrm{d}y,$$
and let us also denote
$$C_{b}^{\eta}f(x)=[b,T^\eta]f(x)=\int_{\mathbb{R}^{n}}(b(x)-b(y))K^{\eta}(x,y)f(y)\mathrm{d}y.$$
We now prove that the commutators $C^\eta_b$ approximate $C_b$ in the operator norm.

\begin{lema}\label{lema2}
Let $b\in{\mathcal C}^1_c(\R^n)$. There exists a constant $C=C(n,C_0)$ such that  
$$|C_b f(x)-C_b^\eta f(x)|\leq C\,\eta\,\|\nabla b\|_\infty\,Mf(x)\hspace{1cm}\text{ almost everywhere,}$$
for every $\eta>0$. As a consequence, 
$$\lim_{\eta\to 0}\|C_b^\eta - C_b\|_{L^p(\omega)\to L^p(\omega)}= 0$$ 
whenever $\omega\in A_p$ and $1<p<\infty$.
\end{lema}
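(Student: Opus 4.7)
The plan is to prove the pointwise estimate first and then deduce the operator-norm statement from the $L^p(\omega)$-boundedness of the Hardy--Littlewood maximal function. Writing the difference as a single integral,
$$C_b f(x)-C_b^\eta f(x)=\int_{\R^n}(b(x)-b(y))\bigl(K(x,y)-K^\eta(x,y)\bigr)f(y)\,dy,$$
the defining property $K^\eta(x,y)=K(x,y)$ for $|x-y|\geq\eta$ makes the integrand vanish outside the ball $\{|x-y|<\eta\}$. On that ball, combining the size bound $|K(x,y)|\leq C|x-y|^{-n}$ from the Calder\'on--Zygmund hypotheses with the bound $|K^\eta(x,y)|\leq C_0|x-y|^{-n}$ (which, as remarked in the paper, may be assumed to hold everywhere), one gets $|K(x,y)-K^\eta(x,y)|\leq C(n,C_0)|x-y|^{-n}$ uniformly in $\eta$.

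For the symbol, $b\in \mathcal{C}^1_c(\R^n)$ and the mean value theorem give $|b(x)-b(y)|\leq \|\nabla b\|_\infty|x-y|$, so that
$$|C_b f(x)-C_b^\eta f(x)|\leq C\,\|\nabla b\|_\infty\int_{|x-y|<\eta}\frac{|f(y)|}{|x-y|^{n-1}}\,dy.$$
The remaining integral is a standard truncated Riesz potential. Decomposing dyadically into the shells $A_k=\{2^{-k-1}\eta<|x-y|\leq 2^{-k}\eta\}$ for $k\geq 0$, on each $A_k$ the kernel is comparable to $(2^{-k}\eta)^{1-n}$ and the integral of $|f|$ over $B(x,2^{-k}\eta)$ is dominated by $|B(x,2^{-k}\eta)|\,Mf(x)$. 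Summing the resulting geometric series $\sum_{k\geq 0} 2^{-k}$ yields
$$\int_{|x-y|<\eta}\frac{|f(y)|}{|x-y|^{n-1}}\,dy\leq C_n\,\eta\,Mf(x),$$
which, combined with the previous inequality, produces the claimed almost-everywhere bound with constant depending only on $n$ and $C_0$.

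To conclude, since $\omega\in A_p$ implies that $M\colon L^p(\omega)\to L^p(\omega)$ is bounded, the pointwise estimate integrates to
$$\|C_b f-C_b^\eta f\|_{L^p(\omega)}\leq C\,\eta\,\|\nabla b\|_\infty\,\|Mf\|_{L^p(\omega)}\leq C'\,\eta\,\|\nabla b\|_\infty\,\|f\|_{L^p(\omega)},$$
with $C'$ depending on $n$, $C_0$, $p$ and $[\omega]_{A_p}$ but not on $f$ or $\eta$. Taking the supremum over $f$ with $\|f\|_{L^p(\omega)}\leq 1$ and letting $\eta\to 0$ gives the operator-norm convergence. There is no real obstacle in the argument; the only point that must be handled carefully is ensuring that $K(x,y)-K^\eta(x,y)$ retains the uniform size bound $|x-y|^{-n}$ on the entire region where it is nonzero, which follows directly from the construction of $K^\eta$.
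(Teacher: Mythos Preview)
Your proof is correct and follows essentially the same approach as the paper: the Lipschitz bound on $b$, the size estimate on the kernel, and the dyadic shell decomposition leading to $Mf$ are all exactly what the paper uses. The only difference is cosmetic---the paper splits the integral into the piece $I_1$ over $|x-y|<\eta$ involving $K$ and the piece $I_2$ over $\eta/2\le|x-y|<\eta$ involving $K^\eta$, whereas you bound $|K-K^\eta|$ directly by $C|x-y|^{-n}$ and handle everything at once, which is slightly cleaner.
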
 
\begin{proof} Let $f\in L^{p}(\omega)$. For every $x\in\mathbb{R}^{n}$ we have:
$$\aligned
C_{b}f(x)-C_{b}^{\eta}f(x)
&=\int_{|x-y|<\eta}(b(x)-b(y))K(x,y)f(y)\mathrm{d}y-\int_{\frac\eta2\leq|x-y|<\eta}(b(x)-b(y))K^{\eta}(x,y)f(y)\mathrm{d}y\\
&= I_{1}(x)+I_{2}(x).
\endaligned$$ 
Using the smoothness of $b$ and the size estimates of $K^\eta$, we have that
$$\aligned
|I_{1}(x)|
&\leq\int_{|x-y|<\eta}|b(y)-b(x)||K(x,y)||f(y)|\mathrm{d}y
%&\leq C_{0}\|\nabla b\|_{\infty} \int_{|x-y|<\eta}\frac{|f(y)|}{|x-y|^{n-1}}\mathrm{d}y\\
\leq C_{0}\,\|\nabla b\|_{\infty}\sum_{j=0}^{\infty}\int_{\frac{\eta}{2^{j+1}}<|x-y|<\frac{\eta}{2^{j}}}\frac{|f(y)|}{|x-y|^{n-1}}\mathrm{d}y\\
%&=  2^{n}C_{0}\:\eta\:\|\nabla b\|_{\infty}\sum_{j=0}^{\infty}\frac{1}{2^{j+1}}\frac{1}{\left(\frac{\eta}{2^{j}}\right)^{n}}\int_{\frac{\eta}{2^{j+1}}<|x-y|<\frac{\eta}{2^{j}}}|f(y)|\mathrm{d}y\\
&\leq 2^{n}C_{0}\,\|\nabla b\|_{\infty}\sum_{j=0}^{\infty}\frac{\eta\,|B(0,1)|}{2^{j+1}} \fint_{|x-y|<\frac{\eta}{2^{j}}}|f(y)|\mathrm{d}y
\leq \eta\,2^{n}\,C_{0}\,\|\nabla b\|_{\infty}\,|B(0,1)|\, Mf(x)
\endaligned
$$ 
for almost every $x$. 
%The punctual estimation implies that
%\begin{eqnarray}
%\|I_{1}\|_{L^{p}(\omega)} & \leq & 2^{n}C_{0}\:\eta\:|B(0,1)|\sum_{j=0}^{\infty}\frac{1}{2^{j+1}}\|\nabla b\|_{\infty}\|Mf\|_{L^{p}(\omega)}\nonumber \\
 %& \leq & 2^{n}C_{0}\:\eta\:|B(0,1)|\sum_{j=0}^{\infty}\frac{1}{2^{j+1}}\:\|\nabla b\|_{\infty}\|f\|_{L^{p}(\omega)}.\label{eq:re1}
 %\end{eqnarray}
For the other term, similarly
$$
\aligned
|I_{2}(x)| 
& \leq \eta\,\|\nabla b\|_\infty\,\int_{\frac{\eta}{2}<|x-y|<\eta}|K^{\eta}(x,y)|\,|f(y)|\,\mathrm{d}y
\leq \eta\,C_{0}\,\|\nabla b\|_\infty\,\int_{\frac{\eta}{2}<|x-y|<\eta}\frac{|f(y)|}{|x-y|^{n}}\mathrm{d}y\\
 & \leq   \eta\,2^n\,C_{0}\,\|\nabla b\|_\infty\,|B(0,1)|\,\fint_{|x-y|<\eta} |f(y)|\,\mathrm{d}y
\leq\eta\,2^n\,C_{0}\,\|\nabla b\|_\infty\,|B(0,1)|\,Mf(x).
 \endaligned
 $$
Therefore, the pointwise estimate follows. Now, the boundedness of $M$ in $L^p(\omega)$ for any $A_p$ weight $\omega$ implies that
$$\aligned
\|C_b f-C_b^\eta f\|_{L^p(\omega)}
&\leq C\,\eta\,\|\nabla b\|_\infty\,\|Mf\|_{L^p(\omega)}\\
&\leq C\,\eta\,\|\nabla b\|_\infty\,\|f\|_{L^p(\omega)}\to 0,\hspace{1cm}\text{ as }\eta \to 0.
\endaligned$$
This finishes the proof of Lemma \ref{lema2}.\end{proof} 
\noindent
We are now ready to conclude the proof of Theorem \ref{thm:compacidad}. From now on, $\eta>0$ and $b\in{\mathcal C}^1_c(\R^n)$ are fixed, and we have to prove that the commutator $C_b^\eta=[b,T^\eta]$ is compact. Thus, the constants that will appear may depend on $b$ and $\eta$.  \\
We denote $\mathfrak{F}=\{C_{b}^{\eta}f; f\in L^p(\omega), \|f\|_{L^p(\omega)}\leq 1\}$. Then $\mathfrak{F}$ is uniformly bounded, because $C_b^\eta$ is a bounded operator on $L^p(\omega)$. To prove the uniform equicontinuity of $\mathfrak{F}$, we must see that
$$
\lim_{h\to 0}\sup_{f\in\mathfrak{F}}\|C_{b}^{\eta}f(\cdot)-C_{b}^{\eta}f(\cdot+h)\|_{L^{p}(\omega)}=0.
$$
To do this, let us write
$$
\aligned
C_{b}^{\eta}&f(x)-C_{b}^{\eta}f(x+h) \\
&= (b(x)-b(x+h))\int_{\mathbb{R}^{n}}K^{\eta}(x,y)f(y)\mathrm{d}y +\int_{\mathbb{R}^{n}}(b(x+h)-b(y))(K^{\eta}(x,y)-K^{\eta}(x+h,y))f(y)\mathrm{d}y\\
 & =  \int_{\R^n}I_{1}(x,y,h)\mathrm{d}y+\int_{\R^n}I_{2}(x,y,h)\mathrm{d}y.
 \endaligned
 $$
 For $I_{1}(x,y,h)$, using the regularity of the function $b$ and the definition of the operator $T_\ast$,
$$
\aligned
\left|\int_{\R^n}I_{1}(x,y,h)\mathrm{d}y\right| 
%& \leq & |b(x)-b(x+h)|\left|\int_{|x-y|>\frac{\eta}{2}}K^{\eta}(x,y)f(y)\mathrm{d}y\right|\\
& \leq   \|\nabla b\|_{\infty}|h|\left|\int_{|x-y|>\frac{\eta}{2}}\left(K^{\eta}(x,y)-K(x,y)\right)f(y)\mathrm{d}y+\int_{|x-y|>\frac{\eta}{2}}K(x,y)f(y)\mathrm{d}y\right|\\
& \leq  \|\nabla b\|_{\infty}|h|\,\left(\int_{|x-y|>\frac{\eta}{2}}|K^{\eta}(x,y)-K(x,y)|\,|f(y)|\mathrm{d}y+T_\ast f(x)\right)\\
& \leq  \|\nabla b\|_{\infty}|h|\,\left(C\,Mf(x)+T_\ast f(x)\right)
\endaligned
$$
for some constant $C>0$ that may depend on $\eta$, but not on $h$. Therefore, by ,
 \begin{equation}
\int\left|\int_{\R^n}I_{1}(x,y,h)\mathrm{d}y\right|^p\omega(x)\mathrm{d}x\leq
C\,|h|\,\|f\|_{L^{p}(\omega)},\label{eq:comp1}\end{equation}
for $C$ independent of $f$ and $h$. Here we used the boundedness of $M$ and $T_\ast$ on $L^p(\omega)$ (see \cite[Chap. IV, Th. 3.6]{GarRub}). We will divide the integral of $I_{2}(x,y,h)$ into three regions:
\begin{eqnarray*}
A & = & \left\{y\in\R^n: |x-y|>\frac{\eta}{2},\quad|x+h-y|>\frac{\eta}{2}\right\} ,\\
B & = & \left\{y\in\R^n: |x-y|>\frac{\eta}{2},\quad|x+h-y|<\frac{\eta}{2}\right\} ,\\
C & = & \left\{y\in\R^n: |x-y|<\frac{\eta}{2},\quad|x+h-y|>\frac{\eta}{2}\right\} .
\end{eqnarray*}
Note that $I_2(x,y,h)=0$ for $y\in\R^n\setminus A\cup B \cup C$. Now, for the integral over $A$, we use the smoothness of $b$ and $K^\eta$,
$$
\aligned
 \left|\int_{A}I_2(x,y,h)\mathrm{d}y\right|  
 &\leq   C \|\nabla b\|_{\infty}|h|\int_{|x-y|>\frac{\eta}{4}}\frac{|f(y)|}{|x-y|^{n+1}}\mathrm{d}y \\
 %& \leq   C \|\nabla b\|_{\infty}|h|\sum_{j=0}^{\infty}\frac{1}{\left(2^{j+1}\frac{\eta}{4}\right)^{n+1}}\int_{|x-y|<\frac{2^{j}\eta}{4}}|f(y)|\mathrm{d}y\nonumber \\
 & \leq  C \|\nabla b\|_{\infty}\frac{|h|}{\eta}\,\sum_{j=0}^{\infty}2^{-j}\fint_{|x-y|<\frac{2^{j}\eta}{4}}|f(y)|\mathrm{d}y
\leq   C \|\nabla b\|_{\infty}\frac{|h|}{\eta}\quad Mf(x),
\endaligned$$
thus
$$
\int_{\R^n}\left|\int_A I_2(x,y,h)\mathrm{d}y\right|^p\omega(x)\mathrm{d}x\leq C |h|\,\|f\|_{L^p(\omega)}.
$$
for some constant $C$ that may depend on $\eta$, but not on $h$. In particular, the term on the right hand side goes to $0$ as $|h|\to 0$.\\
The integrals of $I_{2}(x,y,h)$ over $B$ and $C$ are symmetric, so we only give the details once. For the integral over the set $B$, let us assume that $|h|$ is very small. We can first choose $R_0>\eta/2+|h|$ such that $b$ vanishes outside the ball $B_0=B(0,R_0)$. It then follows that $b(\cdot + h)$ has support in $2B_0$. Then, since $B\subset B(x,|h|+\eta/2)$, we have for $|x|<3R_0$ that $B\subset 4B_0$ and therefore
$$
\aligned
\left|\int_B I_{2}(x,y,h)\mathrm{d}y\right| 
& \leq  C_{0} \,\|\nabla b\|_\infty\int_{B\cap 4B_{0}}\frac{|x+h-y|\,|f(y)|}{|x-y|^{n}}\mathrm{d}y \leq  C_{0} \,\|\nabla b\|_\infty\int_{B\cap 4B_{0}}\frac{ |f(y)|}{|x-y|^{n-1}}\mathrm{d}y\\
&\leq C_{0} \,\|\nabla b\|_\infty\,(2/\eta)^{n-1}\int_{B\cap 4B_{0}}  |f(y)| \omega(y)^\frac1p \omega(y)^{-\frac1p} \mathrm{d}y\\
&\leq C_{0} \,\|\nabla b\|_\infty\,(2/\eta)^{n-1} \|f\|_{L^p(\omega)}\,\left(\int_{B\cap 4B_0}\omega(y)^{-\frac{p'}{p}}\,\mathrm{d}y\right)^\frac{1}{p'}
%
%\int_{B\cap 3B_{0}}  |f(y)| \omega(y)^\frac1p \omega(y)^{-\frac1p} \mathrm{d}y\\
%
%&\leq \left(\frac{2}{\eta}\right)^{n-1}C_{0}\|\nabla b\|_{\infty}\|f\|_{L^{p}(\omega)}\left(\int_{2B_0}\omega(y)^{-\frac{p'}{p}}\mathrm{d}y\right)^{\frac{1}{p'}}
 \endaligned
 $$
whence
$$\aligned
\int_{3B_0}\left|\int_B I_{2}(x,y,h)\mathrm{d}y\right|^{p}\omega(x) \mathrm{d}x
& \leq  
C\,\|f\|_{L^{p}(\omega)}^p\,\left(\int_{3B_0}\omega(x)\mathrm{d}x\right)\left(\int_{B\cap 4B_{0}}\omega(y)^{-\frac{p'}{p}}\mathrm{d}y\right)^{\frac{p}{p'}} 
%\\
 %& \leq  C\,[\omega]_{A_p}|4B_0|^p \,\|f\|_{L^{p}(\omega)}^{p} ,
\endaligned$$
for some constant $C$ that might depend on $\eta$, but not on $h$. % Also
%$$
%\int_{|x|<3R_{0}}|I_{2}(x,h)|_{B}|^{p}\omega(x)\mathrm{d}x\to0
%$$
%when $|B|\to0$ of uniformly in $f$ . Now, i
If, instead, we have $|x|\geq 3R_{0}$, then $b(x+h)=0$ (because $|h|<R_0$ so that $|x+h|>2R_0$). Note also that for $y\in B$ one has $|x|\leq C |x-y|$ where $C$ depends only on $\eta$. Therefore
$$
\aligned
\left|\int_B I_{2}(x,y,h)\mathrm{d}y\right|  \leq   C\|b\|_{\infty}\int_{B\cap 4B_0}\frac{|f(y)|}{|x-y|^{n}}\mathrm{d}y 
& \leq \frac{C \|b\|_{\infty}}{|x|^{n}}\int_{B\cap 4B_0}|f(y)|\mathrm{d}y\\
&%\leq \frac{C \|b\|_{\infty}}{|x|^{n}}\int_{B\cap 2B_0}\frac{|f(y)|\omega(y)^{\frac{1}{p}}}{\omega(y)^{\frac{1}{p}}}\mathrm{d}y 
  \leq \frac{C \|b\|_{\infty}}{|x|^{n}}\, \|f\|_{L^{p}(\omega)} \left(\int_{B\cap 4B_0}\omega(y)^{-\frac{p'}{p}}\mathrm{d}y\right)^{\frac{1}{p'}}.
  \endaligned
  $$
This implies that 
$$\aligned
\int_{\R^n\setminus 3B_0}\left|\int_{B}I_{2}(x,y,h)\mathrm{d}y\right|^{p}\omega(x)\mathrm{d}x 
& \leq C \|b\|_{\infty}^p\,\|f\|_{L^{p}(\omega)}^{p}\left(\int_{\R^n\setminus 3B_0}\frac{\omega(x)}{|x|^{np}}\mathrm{d}x\right)\left(\int_{B\cap 4B_0}\omega(y)^{-\frac{p'}{p}}\mathrm{d}y\right)^{\frac{p}{p'}}  \\
 %& \leq  C_{3}\left(\int_{B\cap B(0,3R_{0})}\omega(y)^{-\frac{p'}{p}}\mathrm{d}y\right)^{\frac{p}{p'}}\xrightarrow[unif]{h\to0}0
\endaligned$$
Summarizing,
\begin{equation}\label{eq22}
\aligned
\int_{\R^n}&\left|\int_B I_2(x,y,h)\mathrm{d}y\right|^p\omega(x)\,\mathrm{d}x\\
&\leq 
C\,\|f\|_{L^p(\omega)}\,\left(\int_{B\cap 4B_0}\omega(y)^{-\frac{p'}{p}}\mathrm{d}y\right)^{\frac{p}{p'}}\,\left(\int_{3B_0}\omega(x)\,\mathrm{d}x+\int_{\R^n\setminus 3B_0}\frac{\omega(x)}{|x|^{np}}\mathrm{d}x\right) 
\endaligned
\end{equation}
After proving that
$$\int_{|x|>3R_{0}}\frac{\omega(x)}{|x|^{np}}\mathrm{d}x<\infty$$
the left hand side of \eqref{eq22} will converge to $0$ as $|h|\to 0$ since $|B|\to 0$ as $|h|\to 0$. To prove the above claim, let us choose $q<p$ such that $\omega\in A_{q}$  \cite[Theorem 2.6, Ch. IV]{GarRub}. For such $q$, we have
$$
\int_{|x|>R}\frac{\omega(x)}{|x|^{np}}\mathrm{d}x =\sum_{j=1}^{\infty}\int_{2^{j-1}<\frac{|x|}{R}<2^{j}}\frac{\omega(x)}{|x|^{np}}\mathrm{d}x\leq  \sum_{j=1}^{\infty}(2^{j-1}R )^{-np}\omega(B(0,2^jR ))$$
By \cite[Lemma 2.2]{GarRub}, we have
\begin{equation}\label{eq:deca1}
\int_{|x|>R}\frac{\omega(x)}{|x|^{np}}\mathrm{d}x \leq \sum_{j=1}^{\infty}(2^{j-1}R)^{-np}(2^{j}R )^{nq}\omega(B(0,1))=\frac{C}{R^{n(p-q)}}<\infty\end{equation}
as desired. The equicontinuity of $\mathfrak{F}$ follows. \\
Finally, we show the decay at infinity of the elements of $\mathfrak{F}$ . Let $x$ be such that $|x|>R>R_0$. Then, $x\not\in\mathrm{supp}\, b$, and
$$\aligned
|C_{b}^{\eta}f(x)| 
& = \left|\int_{\mathbb{R}^{n}}(b(x)-b(y))K^{\eta}(x,y)f(y)\mathrm{d}y\right| \\
&\leq  C_{0}\|b\|_{\infty}\int_{\mathrm{supp}\, b}\frac{|f(y)|}{|x-y|^{n}}\mathrm{d}y\\
 &\leq \frac{C\| b\|_{\infty}}{|x|^{n}}\int_{\mathrm{supp}\, b} |f(y)|\,\mathrm{d}y \\
 &\leq \frac{C \|b\|_{\infty}}{|x|^{n}}\,\|f\|_{L^{p}(\omega)}\,\left(\int_{\mathrm{supp}\, b}\omega(y)^{-\frac{p'}{p}}dy\right)^{\frac{1}{p'}} 
\endaligned$$
whence
$$
\left(\int_{|x|>R }|C_{b}^{\eta}f(x)|^{p}\omega(x)\mathrm{d}x\right)^\frac1p\leq C\|b\|_\infty\|f\|_{L^{p}(\omega)}\left(\int_{|x|>R}\frac{\omega(x)}{|x|^{np}}\,\mathrm{d}x\right)^\frac1p.
$$
The right hand side above converges to $0$ as $R\to\infty$, due to \eqref{eq:deca1}. By Theorem \ref{thm:FreKol}, $\mathfrak{F}$ is totally bounded. Theorem \ref{thm:compacidad} follows. 

\section{A priori estimates for Beltrami equations}\label{Est}

We first prove Theorem \ref{main}. To do this, let us remember that the Beurling-Ahlfors singular integral operator is defined by the following principal value
$$\B f(z)=-\frac1\pi\,P.V.\int \frac{f(w)}{(z-w)^2}\,\mathrm{d}w.$$
This operator can be seen as the formal $\partial$ derivative of the Cauchy transform,
$${\mathcal C} f(z)=\frac{1}{\pi}\int \frac{f(w)}{z-w}\,\mathrm{d}w.$$
At the frequency side, $\B$ corresponds to the Fourier multiplier $m(\xi)=\frac{\bar{\xi}}{\xi}$, so that $\B$ is an isometry in $L^2(\C)$. Moreover, this Fourier representation also explains the important relation 
$$\B(\overline\partial f)=\partial f$$
for smooth enough functions $f$.  By $\B^\ast$ we mean the singular integral operator obtained by simply conjugating the kernel of $\B$, that is,
$$\B^\ast(f)(z)=-\frac{1}{\pi}\,P.V.\int\frac{ f(w) }{(\bar{z}-\bar{w})^2}\,\mathrm{d}w.$$
Note that $\B^\ast$ has Fourier multiplier $m^\ast(\xi)=\frac{\xi}{\bar{\xi}}$. Thus, 
$$\B\B^\ast=\B^\ast\B =\Id.$$ 
In other words, $\B^\ast$ is the $L^2$-inverse  of $\B$. It also appears as the $\C$-linear adjoint of $\B$, 
$$\int_\C \B f (z)\,\overline{g(z)}\,\mathrm{d}z=\int_\C f(z)\,\overline{\B^\ast g(z)}\,\mathrm{d}z.$$
The complex conjugate operator $\overline\B$ is the composition of $\B$ with the complex conjugation operator $\mathbf{C}f=\overline{f}$, that is, 
$$\overline{\B}(f)=\mathbf{C} \B(f)=\overline{\B (f)}.$$
It then follows that
$$\overline{\B}=\mathbf{C} \B=\B^\ast \mathbf{C}.$$
Note that $\B$ and $\B^\ast$ are $\C$-linear operators, while $\overline\B$ is only $\R$-linear. See \cite[Chapter 4]{AsIwMa} for more about the Beurling-Ahlfors transform.

\begin{proof}[Proof of  Theorem \ref{main}]
We follow Iwaniec's idea \cite[pag. 42--43]{Iwa}. For every $N=1,2,...$, let  
$$P_{N}=\Id+\mu \B+\cdots+(\mu \B)^{N}.$$
Then
\begin{equation}\label{fredh}
(\Id-\mu \B)P_{N-1}=P_{N-1}(\Id-\mu \B)=\Id-\mu^{N}\B^{N}+K_N 
\end{equation}
where $K_N=\mu^{N}\B^{N}-(\mu \B)^{N}$. Each $K_N$ consists of a finite sum of operators that contain the commutator $[\mu,\B]$ as a factor. Thus, by Theorem \ref{thm:compacidad}, each $K_N$ is compact in $L^p(\omega)$. On the other hand, the iterates of the Beurling transform $\B^N$ have the kernel
$$b_{N}(z)=\frac{(-1)^{N}N}{\pi}\frac{\bar{z}^{N-1}}{z^{N+1}}.$$
Therefore,
$$\|\B^{N}\|_{L^{p}(\omega)} \leq  CN^{2},$$
where the constant $C$ depends on $[\omega]_{A_{p}}$, but not on $N$. As a consequence, 
$$\|\mu^{N}\B^{N}f\|_{L^{p}(\omega)}\leq CN^{2}\|\mu\|_{\infty}^{N}\|f\|_{L^{p}(\omega)},$$
and therefore, for large enough $N$, the operator  $\Id-\mu^{N}\B^N$ is invertible. This, together with \eqref{fredh}, says that $\Id-\mu \B$ is an Fredholm operator. Now apply the index theory to $\Id-\mu\B$. The continuous deformation $\Id-t\mu \B$, $0\leq t\leq 1$, is a homotopy from the identity operator to $\Id-\mu\B$. By the homotopical invariance of $\ind$, 
$$
\ind(\Id-\mu \B)=\ind(\Id)=0.
$$
Since injective operators with $0$ index are onto, for the invertibility of $\Id-\mu\B$ it just remains to show that it is injective.  So let $f\in L^p(\omega)$ be such that $f=\mu \B f$. Then $f$ has compact support. Now, since belonging to $A_p$ is an open-ended condition (see e.g. \cite[Theorem IV.2.6]{GarRub}), there exists $\delta>0$ such that $p-\delta>1$ and $\omega\in A_{p-\delta}$. Then $\omega^{-\frac{1}{p-\delta}}\in L_{loc}^{1}(\mathbb{C})$. Taking $\epsilon=\frac{\delta}{p-\delta}$, we obtain
\begin{eqnarray*}
\int_{\mathbb{C}}|f(x)|^{1+\epsilon}\mathrm{d}x & \leq & \left(\int_{\supp f}|f(x)|^{p}\omega(x)\mathrm{d}x\right)^{\frac{1+\epsilon}{p}}\left(\int_{\supp f}\omega(x)^{-\frac{1+\epsilon}{p-(1+\epsilon)}}\mathrm{d}x\right)^{\frac{p-(1+\epsilon)}{p}}\\
 & \leq & \|f\|_{L^{p}(\omega)}^{1+\epsilon}\left(\int_{\supp f}\omega(x)^{-\frac{1+\epsilon}{p-(1+\epsilon)}}\mathrm{d}x\right)^{\frac{p-(1+\epsilon)}{p}}<\infty,
 \end{eqnarray*}
therefore $f\in L^{1+\epsilon}(\mathbb{C})$. But $\Id-\mu\B$ is injective on $L^p(\mathbb C)$, $1<p<\infty$ when $\mu\in VMO(\mathbb{C})$, by Iwaniec's Theorem. Hence, $f\equiv0$.\\
Finally, since $\Id-\mu\B:L^p(\omega)\to L^p(\omega)$ is linear, bounded, and invertible, it then follows that it has a bounded inverse, so the inequality
$$\|g\|_{L^p(\omega)}\leq C\,\|(\Id-\mu\B)g\|_{L^p(\omega)}$$
holds for every $g\in L^p(\omega)$. Here the constant $C>0$ depends only on the $L^p(\omega)$ norm of $\Id-\mu\B$, and therefore on $p, k$ and $[\omega]_{A_p}$, but not on $g$. As a consequence, given $g\in L^p(\omega)$, and setting 
$$f:={\mathcal C}(\Id-\mu\B)^{-1}g,$$ 
we immediately see that $f$ satisfies $\overline\partial f-\mu\partial f=g$. Moreover, since $\omega\in A_p$,
$$\aligned
\|Df\|_{L^p(\omega)}&\leq\|\partial f\|_{L^p(\omega)}+\|\overline\partial f\|_{L^p(\omega)}\\
&=\|\B(\Id-\mu\B)^{-1}g\|_{L^p(\omega)}+\|(\Id-\mu\B)^{-1}g\|_{L^p(\omega)}\leq C\|g\|_{L^p(\omega)},\endaligned$$
where still $C$ depends only on $p,k$ and $[\omega]_{A_p}$.\\
For the uniqueness, let us choose two solutions $f_1$, $f_2$ to the inhomogeneous equation. The difference $F= f_1-f_2$ defines a solution to the homogeneous equation $\overline\partial F-\mu\,\partial F=0$. Moreover, one has that $DF\in L^p(\omega)$ and, arguing as before, one sees that $DF\in L^{1+\epsilon}(\C)$. In particular, this says that $(I-\mu \B)(\overline\partial F)=0$. But for $\mu\in VMO(\C)$, it follows from Iwaniec's Theorem that $\Id-\mu\B$ is injective in $L^p(\C)$ for any $1<p<\infty$, whence $\overline\partial F=0$. Thus $DF=0$ and so $F $ is a constant.
\end{proof}

\noindent
The $\C$-linear Beltrami equation is a particular case of the following one,
$$\overline\partial f(z)-\mu(z)\,\partial f(z)-\nu(z)\,\overline{\partial f(z)}=g(z),$$
which we will refer to as the \emph{generalized Beltrami equation}. It is well known that, in the plane, any linear, elliptic system, with two unknowns and two first-order equations on the derivatives, reduces to the above equation (modulo complex conjugation), whence the interest in understanding it is very big. An especially interesting example is obtained by setting $\mu=0$, when one obtains the so-called \emph{conjugate Beltrami equation},
$$\overline\partial f(z)-\nu(z)\,\overline{\partial f(z)}=g(z).$$
%This equation arises naturally in the complex formulation of the isotropic version of Calder\'on's conductivity problem \cite{AstPai}. 
A direct adaptation of the above proof immediately drives the problem towards the commutator $[\nu,\overline{\B}]$. Unfortunately, as an operator from $L^p(\omega)$ onto itself, such commutator is not compact in general, even when $\omega=1$. To show this, let us choose
$$\nu=i\,\nu_0\,\chi_\D + \nu_1\chi_{\C\setminus\D}$$ 
where the constant $\nu_0\in\R$ and the function $\nu_1$ are chosen so that $\nu$ is continuous on $\C$, compactly supported in $2\D$, with $\|\nu\|_\infty<1$. Let us also consider 
$$E=\left\{ f\in L^p;\,\|f\|_{L^p}\leq 1,\,\supp(f)\subset \D\right\},$$
which is a bounded subset of $L^p$. For every $f\in E$, one has
$$\aligned
\nu\,\overline{\B(f)}-\overline{\B(\nu f)}
&=\chi_\D  i \nu_0 \overline{\B(f)}+\chi_{\C\setminus\D}\,\nu_1\,\overline{\B(f)}-\overline{\B(i \nu_0 f)}\\
&=\chi_\D  i \nu_0 \overline{\B(f)}+\chi_{\C\setminus\D}\,\nu_1\,\overline{\B(f)}+i\nu_0\overline{\B( f)}\\
&=\chi_\D  2i \nu_0 \overline{\B(f)}+\chi_{\C\setminus\D}\,(i\nu_0+\nu_1)\,\overline{\B(f)}.
\endaligned$$ 
In view of this relation, and since $\B$ is not compact, we have just cooked a concrete example of function $\nu\in VMO$ for wich the commutator $[\nu,\overline\B]$ is not compact. 
Nevertheless, it turns out that still a priori estimates hold, even for the generalized equation. 

\begin{thm}\label{conjug}
Let $1<p<\infty$, $\omega\in A_p$, and let $\mu,\nu\in VMO(\C)$ be compactly supported, such that $\||\mu|+|\nu|\|_\infty<1$. Let $g\in L^p(\omega)$. Then the equation
$$\overline\partial f(z)-\mu(z)\,\partial f(z)-\nu(z)\,\overline{\partial f(z)}=g(z)$$
has a solution $f$ with $Df\in L^p(\omega)$ and
$$\|Df\|_{L^p(\omega)}\leq C\,\|g\|_{L^p(\omega)}.$$
This solution is unique, modulo an additive constant.
\end{thm}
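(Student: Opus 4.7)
The plan is to follow the scheme used for Theorem~\ref{main}, adapted to the $\R$-linear setting. Substituting $h=\overline\partial f$ (so $\partial f=\B h$ and $\overline{\partial f}=\overline{\B h}$) recasts the equation as $(\Id-\mathcal{T})h=g$ on $L^p(\omega)$, where $\mathcal{T}h:=\mu\,\B h+\nu\,\overline{\B h}$ is bounded but only $\R$-linear. Once $\Id-\mathcal{T}$ is invertible, setting $f=\mathcal{C}\bigl((\Id-\mathcal{T})^{-1}g\bigr)$ produces the solution with the desired a priori estimate exactly as at the end of the proof of Theorem~\ref{main}, and uniqueness reduces to injectivity of $\Id-\mathcal{T}$ applied to $\overline\partial F$, where $F$ is the difference of two solutions.

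I would then run Iwaniec's Fredholm argument. Expand $\mathcal{T}^N$ as a sum of $2^N$ monomials $T_w=M_{c_1}S_1\cdots M_{c_N}S_N$ indexed by $w\in\{0,1\}^N$ with $(c_i,S_i)\in\{(\mu,\B),(\nu,\overline\B)\}$, and move every multiplication to the left past the preceding singular integrals. For $\B$ this produces a commutator $[\B,c]$; for $\overline\B$ one uses
$$\overline\B\circ M_c=M_{\bar c}\circ\overline\B+\mathbf{C}\circ[\B,c],$$
so the coefficient picks up a conjugation while the error is again compact by Theorem~\ref{thm:compacidad}. Collecting all errors into a compact operator $K_N$ leaves $\mathcal{T}^N=R_N+K_N$ with
$$R_N=\sum_{w\in\{0,1\}^N}M_{C_w}\,W_w,$$
where $|C_w(z)|=\prod_i|c_i(z)|$ and $W_w=S_1\cdots S_N$ is a plain word in $\{\B,\overline\B\}$.

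The crux is proving $\|R_N\|\to 0$. The naive estimate $\sum_w\|C_w\|_\infty\|W_w\|\le(\|\mu\|_\infty+\|\nu\|_\infty)^N\,CN^2$ is insufficient, because the assumption $\||\mu|+|\nu|\|_\infty=:k<1$ does not imply $\|\mu\|_\infty+\|\nu\|_\infty<1$. The saving is the pointwise identity
$$\sum_{w\in\{0,1\}^N}|C_w(z)|=(|\mu(z)|+|\nu(z)|)^N\le k^N,$$
together with the remark that the relations $\B\overline\B=\mathbf{C}$, $\overline\B\,\overline\B=\Id$, $\mathbf{C}\B=\B^*\mathbf{C}$ collapse every word $W_w$ to an operator of the form $\mathbf{C}^a\B^{\pm b}$ with $a\in\{0,1\}$ and $0\le b\le N$. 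In particular $\{W_w:w\in\{0,1\}^N\}$ consists of only $O(N)$ distinct operators, each of $L^p(\omega)$ norm at most $CN^2$ by the kernel computation used in Theorem~\ref{main}. Dominating $\sum_w|C_w(z)\,W_wh(z)|$ by $k^N\max_W|Wh(z)|$ and bounding this maximum by an $\ell^p$-sum over the $O(N)$ distinct $\|Wh\|_{L^p(\omega)}$ gives $\|R_N\|\le CN^{2+1/p}\,k^N\to 0$.

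With this decomposition in hand, for $N$ large $\Id-\mathcal{T}^N=(\Id-R_N)-K_N$ is a compact perturbation of an invertible operator, and since $P_{N-1}=\sum_{k=0}^{N-1}\mathcal{T}^k$ commutes with $\Id-\mathcal{T}$ in the factorization $(\Id-\mathcal{T})P_{N-1}=\Id-\mathcal{T}^N$, an approximate inverse for $\Id-\mathcal{T}^N$ modulo compact yields one for $\Id-\mathcal{T}$, so $\Id-\mathcal{T}$ is Fredholm; its index is zero by the homotopy $t\mapsto\Id-t\mathcal{T}$, whose coefficients $t\mu,t\nu$ still satisfy the hypotheses for $t\in[0,1]$. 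Injectivity is handled exactly as in Theorem~\ref{main}: any $h=\mathcal{T}h$ is compactly supported in $\supp\mu\cup\supp\nu$ and improves to $h\in L^{1+\epsilon}(\C)$ by the open-endedness of $A_p$, whence Koski's injectivity theorem~\cite{Kos} for the generalized equation in unweighted $L^{1+\epsilon}(\C)$ forces $h\equiv 0$. The main obstacle, and the only point going genuinely beyond Theorem~\ref{main}, is precisely this norm estimate for $R_N$: the gap between $\||\mu|+|\nu|\|_\infty<1$ and $\|\mu\|_\infty+\|\nu\|_\infty<1$ forces a pointwise coefficient argument, which is what absorbs the $2^N$ terms in the expansion of $\mathcal{T}^N$.
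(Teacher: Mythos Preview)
Your proposal is correct and follows essentially the same approach as the paper: Lemma~\ref{generalfredholm} carries out the identical decomposition $\mathcal{T}^N=R_N+K_N$, commuting the coefficient multiplications to the left via Theorem~\ref{thm:compacidad}, collapsing the residual singular-integral words (via $\mathbf{C}\B=\B^\ast\mathbf{C}$ and $\B\B^\ast=\Id$) to $O(N)$ distinct powers $\B^{\pm n}$, and then invoking the pointwise identity $\sum_w|C_w(z)|=(|\mu(z)|+|\nu(z)|)^N\le k^N$ to obtain $\|R_N\|\le Ck^NN^3\to 0$ (your $N^{2+1/p}$ via an $\ell^p$ rather than $\ell^1$ sum is a harmless variant). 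The only difference is in the injectivity step: you invoke Koski~\cite{Kos} for unweighted $L^{1+\epsilon}(\C)$ as a black box, whereas the paper supplies a self-contained proof, handling $p\ge 2$ by compact support plus the $L^2$-isometry of $\B,\overline\B$, and reducing $p<2$ to $p'\ge 2$ via the $\R$-linear duality computation of Lemma~\ref{adjoints}.
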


\noindent
A previous proof for the above result has been shown in \cite{Kos} for the constant weight $\omega=1$. For the weighted counterpart, the arguments are based on a Neumann series argument similar to that in \cite{Kos}, with some minor modification. We write it here for completeness. The following Lemma will be needed.% Let us remind that $\mathbf{C}$ denotes the complex conjugation operator.

\begin{lema}\label{adjoints}
Let $\mu,\nu\in L^\infty(\C)$ be measurable, bounded with compact support, such that $\||\mu|+|\nu|\|_\infty<1$. If $1<p<\infty$  and $p'=\frac{p}{p-1}$, then the following statements are equivalent:
\begin{enumerate}
\item The operator $\Id-\mu\,\B-\nu\,\overline{\B}:L^p(\C)\to L^p(\C)$ is bijective.
\item The operator $\Id-\overline\mu\,\B^\ast-\nu\,\overline{\B^\ast}:L^{p'}(\C)\to L^{p'}(\C)$ is bijective.
\end{enumerate}
\end{lema}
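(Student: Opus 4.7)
The proof rests on the standard real-Banach-space duality between $L^p(\C)$ and $L^{p'}(\C)$. Endow both spaces with the real bilinear pairing
\[
\langle f, g\rangle \;:=\; \Re \int_\C f(z)\,\overline{g(z)}\,\mathrm{d}z,
\]
which realizes them as mutual duals when viewed as real Banach spaces. With respect to this pairing, a bounded $\R$-linear operator $T:L^p(\C)\to L^p(\C)$ is bijective if and only if its $\R$-linear adjoint $T^\dagger:L^{p'}(\C)\to L^{p'}(\C)$ is bijective (the real-Banach-space version of the open mapping / closed range theorem). The plan is to identify the operator $S := \Id - \bar\mu\,\B^\ast - \nu\,\overline{\B^\ast}$ in statement (2) with a bounded-bijection conjugate of $T^\dagger$, so that the equivalence (1)$\Leftrightarrow$(2) reduces to this general duality principle.

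I would first compute $T^\dagger$ for $T = \Id - \mu\,\B - \nu\,\overline{\B}$. The basic adjoints are $M_\mu^\dagger = M_{\bar\mu}$ and $\mathbf{C}^\dagger = \mathbf{C}$ (where $\mathbf{C}f=\bar f$); moreover $\B^\dagger = \B^\ast$, since $\B$ is $\C$-linear and its real-bilinear adjoint then coincides with the $\C$-linear adjoint recalled at the start of this section. Using $(AB)^\dagger = B^\dagger A^\dagger$ together with the factorization $\overline{\B} = \B^\ast\mathbf{C}$, a summand-by-summand calculation yields
\[
T^\dagger g \;=\; g \;-\; \B^\ast(\bar\mu\, g) \;-\; \overline{\B}(\bar\nu\, g).
\]

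The key algebraic observation is then the similarity
\[
S \;=\; \B\,T^\dagger\,\B^\ast \qquad \text{on } L^{p'}(\C).
\]
Since $\B$ and $\B^\ast$ are bounded on $L^{p'}(\C)$ by Calder\'on--Zygmund theory and are mutual inverses ($\B\,\B^\ast = \B^\ast\B = \Id$), they are bounded bijections of $L^{p'}(\C)$; hence $S$ is bijective if and only if $T^\dagger$ is, which combined with the duality principle above gives (1)$\Leftrightarrow$(2). The similarity identity is verified by a direct computation using only $\B\,\B^\ast = \Id$ together with $\B\,\overline{\B} = \mathbf{C}$, the latter an immediate consequence of $\overline{\B} = \B^\ast\mathbf{C}$. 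Indeed, applying $\B\,\overline{\B} = \mathbf{C}$ to the summand $\B\,\overline{\B}(\bar\nu\,\B^\ast g)$ arising in the expansion of $\B\,T^\dagger\,\B^\ast g$ produces $\mathbf{C}(\bar\nu\,\B^\ast g) = \nu\,\overline{\B^\ast g}$, which is exactly the $\C$-antilinear summand of $S$. The only subtlety throughout is bookkeeping for the $\C$-antilinear part $\nu\,\overline{\B}$; the computation is cleanest if one systematically uses $\overline{\B} = \B^\ast\mathbf{C}$ and $\overline{\B^\ast} = \B\,\mathbf{C}$ to move $\mathbf{C}$ past Beurling-type operators at the cost of swapping $\B \leftrightarrow \B^\ast$.
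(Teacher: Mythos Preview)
Your argument is correct and follows essentially the same route as the paper: both view $L^p(\C)$ as a real Banach space with the pairing $\langle f,g\rangle=\Re\int f\bar g$, compute the $\R$-linear adjoint of $T=\Id-\mu\B-\nu\overline\B$, and then identify $S=\Id-\bar\mu\B^\ast-\nu\overline{\B^\ast}$ with a $\B$-conjugate of $T^\dagger$ using $\B\B^\ast=\Id$ and $\mathbf{C}\B=\B^\ast\mathbf{C}$. The paper records the similarity as $T'=\B^\ast S\,\B$ on $L^{p'}$, which is the same identity as your $S=\B\,T^\dagger\B^\ast$ after conjugating by $\B^\ast$ and $\B$; your write-up is in fact a bit more economical, since the paper also notes the companion identity $(\Id-\mu\B^\ast-\nu\overline{\B^\ast})'=\B(\Id-\bar\mu\B-\nu\overline\B)\B^\ast$, which is not needed for the lemma as stated.
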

\begin{proof}
When $\nu=0$, the above result is well known, and follows as an easy consequence of the fact that, with respect to the dual pairing
\begin{equation}\label{complexdual}
\langle f,g\rangle=\int_\C f(z)\,\overline{g(z)}\,\mathrm{d}z,
\end{equation}
the operator $\Id-\mu\B:L^p(\C)\to L^p(\C)$ has precisely $\Id-\B^\ast\overline\mu:L^{p'}(\C)\to L^{p'}(\C)$ as its $\C$-linear adjoint. Unfortunately, when $\nu$ does not identically vanish, $\R$-linear operators do not have an adjoint with respect to this dual pairing. An alternative proof can be found in \cite{Kos}. To this end, we think the space of $\C$-valued $L^p$ functions $L^p(\C)$ as an $\R$-linear space,
$$L^p(\C)=L^p_\R(\C)\oplus L^p_\R(\C),$$
by means of the obvious identification $u+iv=(u,v)$. According to this product structure, every bounded $\R$-linear operator $T:L^p_\R(\C)\oplus L^p_\R(\C)\to L^p_\R(\C)\oplus L^p_\R(\C)$ has an obvious matrix representation
$$T(u+iv)=T\left(\begin{array}{c}u\\ v\end{array}\right)=\left(\begin{array}{cc}T_{11}&T_{12}\\T_{21}&T_{22}\end{array}\right)\left(\begin{array}{c}u\\ v\end{array}\right),$$
where every $T_{ij}:L^p_\R(\C)\to L^p_\R(\C)$ is bounded. Similarly, bounded linear functionals $U: L^p_\R(\C)\oplus L^p_\R(\C)\to \R$ are represented by
$$U\left(\begin{array}{c}u\\ v\end{array}\right)=\left(\begin{array}{cc}U_{1}&U_{2}\end{array}\right)\left(\begin{array}{c}u\\ v\end{array}\right),$$
where every $U_j:L^p_\R(\C)\to\R$ is bounded. By the Riesz Representation Theorem, we get that $L^p_\R(\C)\oplus L^p_\R(\C)$ has precisely $L^{p'}_\R(\C)\oplus L^{p'}_\R(\C)$  as its topological dual space. In fact, we have an $\R$-bilinear dual pairing,
$$
\langle \left(\begin{array}{c}u\\v\end{array}\right),\left(\begin{array}{c}u'\\v'\end{array}\right)\rangle=\int u(z)\,u'(z)\,\mathrm{d}z+\int v(z)\,v'(z)\,\mathrm{d}z,
$$
whenever $(u,v)\in L^p_\R(\C)\oplus L^p_\R(\C)$ and $(u',v')\in L^{p'}_\R(\C)\oplus L^{p'}_\R(\C)$, and 
which is nothing but the real part of \eqref{complexdual}. Under this new dual pairing, every $\R$-linear opeartor $T:L^p_\R(\C)\oplus L^p_\R(\C)\to L^p_\R(\C)\oplus L^p_\R(\C)$ can be associated another operator $$T': L^{p'}_\R(\C)\oplus L^{p'}_\R(\C)\to  L^{p'}_\R(\C)\oplus L^{p'}_\R(\C),$$
called the $\R$-adjoint operator of $T$, defined by the common rule 
$$\langle \left(\begin{array}{c}u\\v\end{array}\right),T'\left(\begin{array}{c}u'\\v'\end{array}\right)\rangle=\langle T\left(\begin{array}{c}u\\v\end{array}\right),\left(\begin{array}{c}u'\\v'\end{array}\right)\rangle.$$ 
If $T$ is a $\C$-linear operator, then $T'$ is the same as the $\C$-adjoint $T^\ast$ (i.e. the adjoint with respect to \eqref{complexdual}) so in particular for the Beurling-Ahlfors transform $\B$ we have an $\R$-adjoint $\B'$, and moreover $\B^\ast=\B'$. Similarly, the pointwise multiplication by $\mu$ and $\nu$ are also $\C$-linear operators. Thus their $\R$-adjoints $\mu'$, $\nu'$ agree with their respectives $\C$-adjoints $\mu^\ast$, $\nu^\ast$. But these are precisely the pointwise multiplication with the respective complex conjugates. Symbollically, $\mu'=\overline\mu$ and $\nu'=\overline{\nu}$. In contrast, general $\R$-linear operators need not have a $\C$-adjoint. For example, for the complex conjugation,
$$\mathbf{C}=\left(
\begin{array}{cc}
\Id& 0\\ 0& -\Id
\end{array}
\right)$$ 
one simply has $\mathbf{C}'=\mathbf{C}$. Putting all these things together, one easily sees that
$$\aligned
(\Id-\mu\B-\nu\overline\B)'
&=(\Id-\mu\B-\nu\mathbf{C}\B)'\\
&=\Id-(\mu\B)'-(\nu\mathbf{C}\B)'\\
&=\Id-\B'\mu'-\B'\mathbf{C}'\nu'\\
&=\Id-\B^\ast\overline\mu-\B^\ast\mathbf{C}\overline\nu\\
%&=\B^\ast\B-\B^\ast\overline\mu\B^\ast\B-\B^\ast\mathbf{C}\overline\nu\B^\ast\B\\
&=\B^\ast\left(\Id-\overline\mu\B^\ast - \mathbf{C}\overline\nu\B^\ast\right)\B\\
&=\B^\ast\left(\Id-\overline\mu\B^\ast - \nu\mathbf{C}\B^\ast\right)\B
\endaligned$$
where we used the fact that $\B^\ast\B=\B\B^\ast=\Id$. As a consequence, and using that both $\B$ and $\B^\ast$ are bijective in $L^p(\C)$, we obtain that the bijectivity of the operator $\Id-\mu\B-\nu\overline\B$ in $L^p_\R(\C)\oplus L^p_\R(\C)$ is equivalent to that of $\Id-\overline\mu\B^\ast - \nu\mathbf{C}\B^\ast$ in the dual space $L^{p'}_\R(\C)\oplus L^{p'}_\R(\C)$. Similarly, one proves that
$$(\Id-\mu\B^\ast-\nu\mathbf{C}\B^\ast)'=\B(\Id-\overline\mu\,\B-\nu\,\overline\B)\B^\ast.$$
Hence, the bijectivity of $\Id-\mu\B^\ast-\nu\mathbf{C}\B^\ast$ in $L^p_\R(\C)\oplus L^p_\R(\C)$   is equivalent to the bijectivity of $\Id-\overline\mu\,\B-\nu\,\overline\B$ in $L^{p'}_\R(\C)\oplus L^{p'}_\R(\C)$.
\end{proof}

\begin{lema}\label{generalfredholm}
If $1<p<\infty$, $\omega\in A_p$, $\mu,\nu\in VMO$ have compact support, and $\||\mu|+|\nu|\|_\infty\leq k<1$, then the operators
$$\Id-\mu\B-\nu\overline\B\hspace{2cm}\text{and}\hspace{2cm}\Id-\mu\B^\ast-\nu\overline{\B^\ast}$$
are Fredholm operators in $L^p(\omega)$.
\end{lema}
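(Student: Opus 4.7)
The plan is to adapt the Neumann-series scheme that proved Theorem \ref{main} to the $\R$-linear operator $T=\mu\B+\nu\overline\B$; an identical argument will then handle $\Id-\mu\B^\ast-\nu\overline{\B^\ast}$. The goal is to produce, for $N$ large, a decomposition
$$\Id-T^N=(\Id-R_N)-K_N$$
with $K_N$ compact on $L^p(\omega)$ and $\|R_N\|_{L^p(\omega)}<1$. Since $(\Id-T)Q_N=Q_N(\Id-T)=\Id-T^N$ for $Q_N=\Id+T+\cdots+T^{N-1}$, such a decomposition makes $\Id-T^N$ Fredholm, and Atkinson's theorem applied on both sides then forces $\Id-T$ to be Fredholm on $L^p(\omega)$.

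The principal step is the ``normal ordering'' of $T^N$ modulo compacts. Expanding $(\mu\B+\nu\overline\B)^N$ produces $2^N$ monomials of the shape $M_1 O_1\cdots M_N O_N$ with $(M_i,O_i)\in\{(\mu,\B),(\nu,\overline\B)\}$. By Theorem \ref{thm:compacidad}, every commutator $[\alpha,\B]$ with $\alpha\in\{\mu,\nu,\overline\mu,\overline\nu\}$ is compact on $L^p(\omega)$; combined with $\overline\B=\mathbf{C}\B$, this yields the identity
$$\overline\B\,M=\overline{M}\,\overline\B+\mathbf{C}\,[\B,M],$$
so that sliding a multiplier leftward past $\overline\B$ costs only a complex conjugation plus a compact error. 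Carrying out this sliding in every monomial, I obtain
$$T^N\equiv R_N\pmod{\K(L^p(\omega))},\qquad R_N=\sum_{w\in\{0,1\}^N}\widetilde M_w\,O_w,$$
where $|\widetilde M_w|=|\mu|^{N-|w|}|\nu|^{|w|}$ pointwise (conjugations preserve absolute values) and $O_w$ is a length-$N$ product of factors in $\{\B,\overline\B\}$.

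Each operator factor $O_w$ is then simplified. Using $\overline\B=\mathbf{C}\B$, $\mathbf{C}\B\mathbf{C}=\B^\ast$, and $\B\B^\ast=\B^\ast\B=\Id$, the product $O_w$ reduces to one of $\mathbf{C}^a\B^m$ or $\mathbf{C}^a(\B^\ast)^m$ with $0\le m\le N$. Since $\mathbf{C}$ is an $\R$-linear isometry on $L^p(\omega)$ and since $\|\B^m\|_{L^p(\omega)},\|(\B^\ast)^m\|_{L^p(\omega)}\le C([\omega]_{A_p})\,m^2$ (the bound already used in the proof of Theorem \ref{main}, which applies to $\B^\ast$ as well since its kernel has the same size estimates), I get $\|O_w\|_{L^p(\omega)}\le C([\omega]_{A_p})\,N^2$ uniformly in $w$. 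Summing over words and using the ellipticity hypothesis,
$$\|R_N\|_{L^p(\omega)}\le C([\omega]_{A_p})N^2\sum_{w\in\{0,1\}^N}\|\mu\|_\infty^{N-|w|}\|\nu\|_\infty^{|w|}=C([\omega]_{A_p})N^2(\|\mu\|_\infty+\|\nu\|_\infty)^N\le CN^2k^N,$$
which tends to $0$ as $N\to\infty$. Choosing $N$ so that $\|R_N\|_{L^p(\omega)}<1$, the operator $\Id-R_N$ is invertible, so $\Id-T^N$ is a compact perturbation of an invertible operator and is Fredholm.

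For $\Id-\mu\B^\ast-\nu\overline{\B^\ast}$ the argument applies verbatim with $\B$ replaced by $\B^\ast$: the relevant commutators $[\alpha,\B^\ast]$ on $L^p(\omega)$ are again compact by Theorem \ref{thm:compacidad}, and $(\B^\ast)^m$ satisfies the same polynomial operator-norm bound as $\B^m$. The main obstacle I foresee is the bookkeeping of the normal-ordering step---tracking which multipliers become conjugated upon crossing an $\overline\B$, and checking that every rearrangement among operator factors collapses, through the identities above, to a short power of $\B$ or $\B^\ast$ (up to an $\R$-linear isometry). However, since conjugation preserves $L^\infty$-norms and all the algebraic identities invoked are exact, the product of the smallness factor $k^N$ with the polynomial factor $N^2$ still tends to zero, and the Fredholm conclusion of Theorem \ref{main} carries over to the generalized setting.
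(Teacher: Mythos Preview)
Your approach is essentially that of the paper: expand $T^N$ into $2^N$ words, use Theorem~\ref{thm:compacidad} to normal-order each word into (multiplier)$\cdot$(operator product) modulo compacts, and then estimate the remaining bounded part. There is, however, one real error in your estimate of $\|R_N\|$.

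You write
\[
\|R_N\|_{L^p(\omega)}\le CN^2\sum_{w}\|\mu\|_\infty^{N-|w|}\|\nu\|_\infty^{|w|}=CN^2\big(\|\mu\|_\infty+\|\nu\|_\infty\big)^N\le CN^2k^N.
\]
The last inequality is false in general: the hypothesis is $\||\mu|+|\nu|\|_\infty\le k$, which does \emph{not} force $\|\mu\|_\infty+\|\nu\|_\infty\le k$. For instance, take $\mu=k\chi_A$, $\nu=k\chi_B$ with $A,B$ disjoint compacts; then $\||\mu|+|\nu|\|_\infty=k$ but $\|\mu\|_\infty+\|\nu\|_\infty=2k$, which may exceed $1$. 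In that case your bound does not give $\|R_N\|<1$ for any $N$.

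The paper avoids this by estimating \emph{pointwise} before summing over words. Since every $O_w$ reduces (as you correctly observe) to some $\mathbf{C}^a\B^{m}$ or $\mathbf{C}^a(\B^\ast)^m$ with $0\le m\le N$, one has the $w$-uniform bound
\[
|O_w f(z)|\le G(z):=\sum_{m=0}^N\big(|\B^m f(z)|+|(\B^\ast)^m f(z)|\big).
\]
Then
\[
|R_Nf(z)|\le\Big(\sum_w|\mu(z)|^{N-|w|}|\nu(z)|^{|w|}\Big)G(z)=\big(|\mu(z)|+|\nu(z)|\big)^N G(z)\le k^N G(z),
\]
and $\|G\|_{L^p(\omega)}\le C\sum_{m\le N}m^2\|f\|_{L^p(\omega)}\le CN^3\|f\|_{L^p(\omega)}$. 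This gives $\|R_N\|\le Ck^N N^3\to 0$, exactly the paper's bound. The point is that the multinomial identity $\sum_w|\mu|^{N-|w|}|\nu|^{|w|}=(|\mu|+|\nu|)^N$ must be exploited pointwise, so that the ellipticity constant $\||\mu|+|\nu|\|_\infty$ appears rather than $\|\mu\|_\infty+\|\nu\|_\infty$. Once you make this change, your argument is complete and matches the paper's proof.
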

\begin{proof}
We will show the claim for the operator $\Id-\mu\B-\nu\overline\B$. For $\Id-\mu\B^\ast-\nu\overline{\B^\ast}$ the proof follows similarly.  It will be more convenient for us to write $\overline\B=\mathbf{C}\B$. As in the proof of Theorem \ref{main}, we set
$$P_N=\sum_{j=0}^N(\mu\B+\nu \mathbf{C}\B)^j.$$
Then
$$\aligned
(\Id-\mu \B-\nu \mathbf{C}\B)\circ P_{N-1}&=\Id-(\mu\B+\nu \mathbf{C}\B)^N,\\
P_{N-1}\circ (\Id-\mu\B+\nu \mathbf{C}\B)&=\Id-(\mu\B+\nu \mathbf{C}\B)^N.\endaligned$$
We will show that
\begin{equation}\label{decomp}
(\mu\B+\nu \mathbf{C}\B)^N=R_N+K_N
\end{equation}
where $K_N$ is a compact operator, and $R_N$ is a bounded, linear operator such that 
$$\|R_N f \|_{L^p(\omega)}\leq C\,k^N\,N^3\,\|f\|_{L^p(\omega)}.$$ 
Then, the Fredholm property follows immediately. To prove \eqref{decomp}, let us write, for any two operators $T_1$, $T_2$,  
$$
(T_1+T_2)^N=\sum_{\sigma\in \{1,2\}^N}T_\sigma,
$$
where $\sigma\in\{1,2\}^N$ means that $\sigma=(\sigma(1),\dots,\sigma(N))$ and $\sigma(j)\in\{1,2\}$ for all $j=1,\dots,N$, and
$$T_\sigma=T_{\sigma(1)} T_{\sigma(2)}\dots T_{\sigma(N)}.$$
By choosing $T_1=\mu \B$ and $T_2=\nu\mathbf{C}\B$, one sees that every $T_{\sigma(j)}$ can be written as
$$T_{\sigma(j)}=M_{\sigma(j)} C_{\sigma(j)} \B$$
being $M_1=\mu$, $M_2=\nu$, $C_1=\Id$ and $C_2=\mathbf{C}$. Thus
$$T_\sigma=M_{\sigma(1)}C_{\sigma(1)} \B \,  M_{\sigma(2)}C_{\sigma(2)} \B\, \dots  M_{\sigma(N)}C_{\sigma(N)} \B.$$
Our main task consists of rewriting $T_\sigma$ as
\begin{equation}\label{rightorder}
T_\sigma=M_{\sigma(1)}C_{\sigma(1)}M_{\sigma(2)}C_{\sigma(2)}   \dots M_{\sigma(N)}C_{\sigma(N)}\,B_\sigma +K_\sigma.
\end{equation}
for some compact operator $K_\sigma$ and some bounded operator $B_\sigma\in\left\{\B,\B^\ast\right\}^N$. If this is possible, then one gets that
$$\aligned
(T_1+T_2)^N
&=\sum_{\sigma\in \{1,2\}^N}M_{\sigma(1)}C_{\sigma(1)}M_{\sigma(2)}C_{\sigma(2)} \dots M_{\sigma(N)}C_{\sigma(N)}\,B_\sigma+\sum_{\sigma\in \{1,2\}^N}K_\sigma\\
&=R_N+K_N.
\endaligned
$$
It is clear that $K_N$ is compact (it is a finite sum of compact operators). Moreover, from $B_\sigma\in\left\{\B,\B^\ast\right\}^N$, one has
$$
|B_\sigma f(z)|\leq \sum_{j=1}^N|\B^nf(z)| + \sum_{j=1}^N|(\B^\ast)^n f(z)|.
$$
Thus
$$\aligned
|R_Nf(z)|
&\leq\sum_{\sigma\in \{1,2\}^N} |M_{\sigma(1)}C_{\sigma(1)}\dots M_{\sigma(N)}C_{\sigma(N)}\,B_\sigma f(z)|\\
&\leq\sum_{\sigma\in \{1,2\}^N} |M_{\sigma(1)}(z)| \dots |M_{\sigma(N)}(z)|\left(\sum_{n=1}^N|\B^nf(z)| + \sum_{j=1}^N|(\B^\ast)^nf(z)|\right)\\
&=\left(\sum_{n=1}^N|\B^nf(z)| + \sum_{j=1}^N|(\B^\ast)^nf(z)|\right).
\,(|M_1(z)|+|M_2(z)|)^N
\endaligned$$
Now, since $\|\B^jf\|_{L^p(\omega)}\leq C_\omega\,j^2\,\|f\|_{L^p(\omega)}$ (and similarly for $(\B^\ast)^n$), one gets that
$$\aligned
\|R_N f\|_{L^p(\omega)}
&\leq \||M_1|+|M_2|\|_\infty^N\,C_\omega \,\left(\sum_{j=1}^N j^2\right)\,\|f\|_{L^p(\omega)}\\
&=C\,k^N\,N^3\,\|f\|_{L^p(\omega)}
\endaligned$$
and so \eqref{decomp} follows from the representation \eqref{rightorder}. To prove that representation \eqref{rightorder} can be found, we need the help of Theorem \ref{thm:compacidad}, according to which the differences $K_j=\B M_{\sigma(j)}-M_{\sigma(j)}\B$ are compact. Thus, 
$$\aligned
T_\sigma
&=M_{\sigma(1)}C_{\sigma(1)} \B \,  M_{\sigma(2)}C_{\sigma(2)} \B\, \dots  M_{\sigma(N)}C_{\sigma(N)} \B\\
&=M_{\sigma(1)}C_{\sigma(1)} M_{\sigma(2)} \B \,  C_{\sigma(2)} M_{\sigma(3)}\, \dots  \B C_{\sigma(N)} \B+K_\sigma
\endaligned$$
where all the factors containning $K_j$ are includded in $K_\sigma$. In particular, $K_\sigma$ is compact. Now, by reminding that 
$$\aligned
&\mathbf{C}\,\B=\B^\ast\,\mathbf{C},
\endaligned
$$
we have that $\B C_{\sigma(j+1)}=C_{\sigma(j+1)} B_j$ for some $B_j \in\left\{\B,\B^\ast\right\}$. Thus
$$\aligned
T_\sigma
&=M_{\sigma(1)}C_{\sigma(1)} M_{\sigma(2)} C_{\sigma(2)} B_1\,M_{\sigma(3)}\, \dots C_{\sigma(N)} B_{N-1} \B+K_\sigma
\endaligned$$
Now, one can start again. On one hand, the differences $B_j\,M_{\sigma(j+2)}-M_{\sigma(j+2)}B_j$ are again compact, because $B_j\in\{\B,\B^\ast\}$ and $M_{\sigma(j+2)}\in VMO$. Moreover, the composition $B_jC_{\sigma(j+2)}$ can be writen as $C_{\sigma(j+2)}\tilde{B}_j$, where $\tilde{B}_j$ need not be the same as $B_j$ but still $\tilde{B}_j\in\{\B,\B^\ast\}$. So, with a little abbuse of notation, and after repeating this algorythm a total of $N-1$ times, one obtains \eqref{rightorder}. The claim follows.
\end{proof}

\begin{proof}[Proof of Theorem \ref{conjug}]
The equation we want to solve can be rewritten, at least formally, in the following terms
$$(\Id-\mu\B-\nu\overline{\B})(\overline\partial f)=g,$$
so that we need to understand the $\R$-linear operator $T=\Id-\mu\B-\nu\overline\B$. By Lemma \ref{generalfredholm}, we know that $T$ is a Fredholm operator in $L^p(\omega)$, $1<p<\infty$. Now, we prove that it is also injective. Indeed, if
$$T(h)=0$$
for some $h\in L^p(\omega)$ and $\omega\in A_p$, it then follows that
$$h=\mu\B(h)+\nu\overline\B(h)$$
so that $h$ has compact support, and thus $h\in L^{1+\epsilon}(\C)$ for some $\epsilon>0$. We are then reduced to show that
$$T:L^{1+\epsilon}(\C)\to L^{1+\epsilon}(\C)\hspace{1cm}\text{is injective.}$$
Let us first see how the proof finishes. Injectivity of $T$ in $L^{1+\epsilon}(\C)$ gives us that $h=0$. Therefore, $T$ is injective also in $L^p(\omega)$. Being as well Fredholm, it is also surjective, so by the open map Theorem it has a bounded inverse $T^{-1}:L^p(\omega)\to L^p(\omega)$. % In particular the estimate
%$$\|(\Id-\mu\B -\nu\overline\B)f\|_{L^p(\omega)}\geq C_p\,\|f\|_{L^p(\omega)}$$
%holds for every $f\in L^p(\omega)$. 
As a consequence, given any $g\in L^p(\omega)$, the function
$$f={\mathcal C}T^{-1}(g)$$
is well defined, and has derivatives in $L^p(\omega)$ satisfying the estimate
$$\aligned
\|Df\|_{L^p(\omega)}
&\leq\|\partial f\|_{L^p(\omega)}+\|\overline\partial f\|_{L^p(\omega)}\\
&=\|\B T^{-1}(g)\|_{L^p(\omega)}+\|T^{-1}(g)\|_{L^p(\omega)}\\
&\leq (C+1)\,\|T^{-1}(g)\|_{L^p(\omega)}\\&\leq C\,\|g\|_{L^p(\omega)},
\endaligned$$
because $\omega\in A_p$. Moreover, we see that $f$ solves the inhomogeneous equation
$$
\overline\partial f(z)-\mu(z)\,\partial f(z)-\nu(z)\,\overline{\partial f(z)}=g(z).
$$
Finally, if there were two such solutions $f_1$, $f_2$, then their difference $F=f_1-f_2$ solves the homogeneous equation, and also $DF\in L^p(\omega)$. Thus
$$T(\overline\partial F)=0.$$
By the injectivity of $T$ we get that $\overline\partial F=0$, and from $DF\in L^p(\omega)$ we get that  $\partial F=0$, whence $F$ must be a constant.\\
We now prove the injectivity of $T$ in $L^p(\C)$, $1<p<\infty$. First, if $p\geq 2$ and $h\in L^p(\C)$ is such that $T(h)=0$, then $h$ has compact support, whence $h\in L^2(\C)$. But $\B,\overline\B$ are isometries in $L^2(\C)$, whence
$$\|h\|_2\leq k\,\|\B h \|_2=k\|f\|_2$$
and thus $h=0$, as desired. For $p<2$, we recall from Lemma \ref{adjoints} that the bijectivity of $T$ in $L^p(\C)$ is equivalent to that of $T'=\Id-\overline{\mu}\B^\ast-\nu\overline{\B^\ast}$ in the dual space $L^{p«}(\C)$. For this, note that the injectivity of $T'$ in $L^{p'}(\C)$ follows as above (since $p'\geq 2$). Note also that, by Lemma \ref{generalfredholm} we know that $T'$ is a Fredholm operator in $L^{p'}(\C)$, since $\overline\mu$ and $\nu$ are compactly supported $VMO$ functions. The claim follows. 
\end{proof}

\section{Applications}\label{BMOconn}

\noindent
We start this section by recalling that if $\mu,\nu\in L^\infty(\C)$ are compactly supported with $\||\mu|+|\nu|\|_\infty\leq k<1$ then the equation
$$\overline\partial \phi(z)-\mu(z)\,\partial\phi(z)-\nu(z)\,\overline{\partial\phi(z)}=0$$
admits a unique homeomorphic $W^{1,2}_{loc}(\C)$ solution $\phi:\C\to\C$ such that $|\phi(z)-z|\to 0$ as $|z|\to\infty$. We call it the \emph{principal} solution, and it defines a global $K$-quasiconformal map, $K=\frac{1+k}{1-k}$.\\
\\
Applications of Theorem \ref{main} are based in the following change of variables lemma, which is already proved in  \cite[Lemma 14]{AstIwaSak}. We rewrite it here for completeness.

\begin{lema}\label{conj4}
Given a compactly supported function $\mu\in L^\infty(\C)$ such that $\|\mu\|_\infty\leq k<1$, let  $\phi$ denote the principal solution to the equation
$$\overline\partial \phi(z)-\mu(z)\,\partial\phi(z)=0.$$
For a fixed weight $\omega$, let us define
$$\eta(\zeta)=\omega(\phi^{-1}(\zeta))\,J(\zeta,\phi^{-1})^{1-\frac{p}2}.$$
The following statements are equivalent:
\begin{enumerate}
\item[\emph{(a)}] For every $h\in L^p(\omega)$, the inhomogeneous equation 
\begin{equation}\label{eqn11}\overline\partial f(z)-\mu(z)\,\partial f(z)=h(z)\end{equation}
has a solution $f$ with $Df\in L^p(\omega)$ and  
\begin{equation}\label{eqn21}
\|Df\|_{L^p(\omega)}\leq C_1\, \|h\|_{L^p(\omega)}.
\end{equation}
\item[\emph{(b)}] For every $\tilde{h}\in L^p(\eta)$, the equation 
\begin{equation}\label{eqn12}\overline\partial g(\zeta)=\tilde{h}(\zeta)\end{equation}
has a solution $g$ with $Dg\in L^p(\eta)$ and
\begin{equation}\label{eqn22}
\|Dg\|_{L^p(\eta)}\leq C_2\, \|\tilde{h}\|_{L^p(\eta)}.
\end{equation}
\end{enumerate}
\end{lema}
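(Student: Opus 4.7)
The plan is to transport equation \eqref{eqn11} to an equation of type \eqref{eqn12} via composition with the principal quasiconformal map $\phi$, and vice versa. I set up the correspondence $f(z)=g(\phi(z))$, equivalently $g(\zeta)=f(\phi^{-1}(\zeta))$. The Wirtinger chain rule, combined with the homogeneous Beltrami equation $\overline\partial\phi=\mu\,\partial\phi$ and the identities $\partial\overline\phi=\overline{\overline\partial\phi}$, $\overline\partial\overline\phi=\overline{\partial\phi}$, yields the pointwise identity
$$\overline\partial f(z)-\mu(z)\,\partial f(z)=\overline{\partial\phi(z)}\,(1-|\mu(z)|^2)\,(\overline\partial g)(\phi(z)).$$
Hence \eqref{eqn11} is equivalent to \eqref{eqn12} under the correspondence $h(z)=\overline{\partial\phi(z)}\,(1-|\mu(z)|^2)\,\tilde h(\phi(z))$.

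Next I carry out the change of variables $\zeta=\phi(z)$, with inverse $\psi=\phi^{-1}$ and Jacobian $J(z,\phi)=|\partial\phi(z)|^2(1-|\mu(z)|^2)$. The identity for $h$ gives
$$\|h\|_{L^p(\omega)}^p=\int_\C|\tilde h(\phi(z))|^p\,|\partial\phi(z)|^p\,(1-|\mu(z)|^2)^p\,\omega(z)\,\mathrm{d}z,$$
and the substitution $z=\psi(\zeta)$ turns this into
$$\int_\C|\tilde h(\zeta)|^p\,|\partial\phi(\psi(\zeta))|^{p-2}\,(1-|\mu(\psi(\zeta))|^2)^{p-1}\,\omega(\psi(\zeta))\,\mathrm{d}\zeta.$$
Since $\eta(\zeta)=\omega(\psi(\zeta))\,J(\zeta,\psi)^{1-p/2}=\omega(\psi(\zeta))\,|\partial\phi(\psi(\zeta))|^{p-2}\,(1-|\mu(\psi(\zeta))|^2)^{p/2-1}$, the two integrands differ only by the pointwise factor $(1-|\mu(\psi(\zeta))|^2)^{p/2}$, which is uniformly comparable to $1$ because $\|\mu\|_\infty\leq k<1$. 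Consequently $\|h\|_{L^p(\omega)}\sim\|\tilde h\|_{L^p(\eta)}$ with constants depending only on $k$ and $p$.

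For the gradient norms I use the chain rule $Df(z)=Dg(\phi(z))\,D\phi(z)$ as a composition of $\mathbb{R}$-linear maps, together with the $K$-quasiconformal bound $|D\phi(z)|^2\sim J(z,\phi)$ (constants depending only on $k$), which yields $|Dg(\phi(z))|^p\sim |Df(z)|^p\,J(z,\phi)^{-p/2}$. The same change of variables then produces
$$\|Dg\|_{L^p(\eta)}^p=\int_\C |Dg(\phi(z))|^p\,\eta(\phi(z))\,J(z,\phi)\,\mathrm{d}z\sim\int_\C |Df(z)|^p\,\omega(z)\,\mathrm{d}z,$$
since $\eta(\phi(z))=\omega(z)\,J(z,\phi)^{p/2-1}$ and the powers of $J$ cancel exactly. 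Hence $\|Dg\|_{L^p(\eta)}\sim\|Df\|_{L^p(\omega)}$.

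With both comparabilities in hand, the equivalence of (a) and (b) is immediate. For (a)$\Rightarrow$(b), given $\tilde h\in L^p(\eta)$ I set $h(z)=\overline{\partial\phi(z)}(1-|\mu(z)|^2)\tilde h(\phi(z))\in L^p(\omega)$, apply (a) to obtain a solution $f$ of \eqref{eqn11} satisfying \eqref{eqn21}, and then verify that $g=f\circ\psi$ satisfies \eqref{eqn12} and \eqref{eqn22} with a constant $C_2$ depending on $C_1$ and $k$; the reverse implication is symmetric. The main subtleties will be the precise bookkeeping in the change of variables (from which the exponent $1-p/2$ in the definition of $\eta$ is forced), and the rigorous justification of the chain rule and substitution formulae, both of which are standard consequences of the fact that $\phi$ is a $W^{1,2}_{loc}$ quasiconformal homeomorphism with the Lusin $(N)$ property and $J(\cdot,\phi)>0$ almost everywhere.
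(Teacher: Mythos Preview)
Your proof is correct and follows essentially the same approach as the paper: transport the Beltrami equation to a $\overline\partial$-equation via the substitution $f=g\circ\phi$, using the chain-rule identity $\overline\partial f-\mu\,\partial f=\overline{\partial\phi}\,(1-|\mu|^2)\,(\overline\partial g)\circ\phi$, and then track the weighted $L^p$ norms through the change of variables $\zeta=\phi(z)$. The only stylistic difference is that the paper treats the two implications separately with one-sided inequalities, whereas you establish the two-sided comparabilities $\|h\|_{L^p(\omega)}\sim\|\tilde h\|_{L^p(\eta)}$ and $\|Df\|_{L^p(\omega)}\sim\|Dg\|_{L^p(\eta)}$ once and conclude both directions simultaneously; this is slightly cleaner and loses nothing.
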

\begin{proof}
Let us first assume that (b) holds. To get (a), we have to find a solution $f$ of \eqref{eqn11} such that $Df\in L^p(\omega)$ with the estimate \eqref{eqn21}. To this end, we make in \eqref{eqn11} the change of coordinates $g=f\circ\phi^{-1}$. We obtain for $g$ the following equation
\begin{equation}\label{eqng}\overline\partial g(\zeta)=\tilde{h}(\zeta),\end{equation}
where $\zeta=\phi(z)$ and
$$\aligned
\tilde{h}(\zeta)
%&=\frac{h(z)
%\left( \overline{\partial\phi^{-1}}+ \overline{\tilde\mu}\,\overline\partial\phi^{-1}\right)
%-\overline{h(\phi^{-1})}\tilde\nu\,\overline\partial\phi^{-1}
%}{ 1-|\tilde\mu|^2+|\tilde{\nu}|^2}\\
&= h(z)\, \frac{\partial\phi(z)}{J(z,\phi) } 
\endaligned.
$$
In order to apply the assumption (b), we must check that $\tilde{h}\in L^p(\eta)$. However,
$$\aligned
\|\tilde{h}\|_{L^p(\eta)}^p=\int |\tilde{h}(\zeta)|^p\,\eta(\zeta)\mathrm{d}\zeta
&=\int |\tilde{h}(\phi(z))|^p\,\omega(z)\,J(z,\phi)^\frac{p}2\,\mathrm{d}z\\
%&=\int\left|\frac{h(z)\left((1-|\mu(z)|^2)\frac{\partial\phi(z)}{J(z,\phi)}
%-\overline{\mu(z)}\,\nu(z)\frac{\overline{\partial\phi}(z)}{J(z,\phi)}\right)+\overline{h(z)}\left( \nu(z)^2\frac{\overline{\partial\phi}(z)}{J(z,\phi)}+ \mu(z)\nu(z)\frac{\partial\phi(z)}{J(z,\phi)}\right)}{ 1-| \mu(z)|^2+| \nu(z)|^2}\right|^2\omega(z)\,J(z,\phi)\,\mathrm{d}z\\
%&=\int\left| h(z)\left( \frac{\partial\phi(z)}{J(z,\phi)}-\overline{\mu(z)}\,\frac{\overline\partial\phi(z)}{J(z,\phi)}\right)\right|^p\frac{\omega(z)\,J(z,\phi)^\frac{p}2\mathrm{d}z}{ (1-| \mu(z)|^2)^p} \\
&=\int \left| h(z)\,   \right|^p\,\frac{\omega(z) }{(1-|\mu(z) |^2)^\frac{p}{2}}\,\mathrm{d}z\leq \frac{1}{(1-k^2)^\frac{p}2}\,\|h\|_{L^p(\omega)}^p.
\endaligned$$
Since $\tilde{h}\in L^p(\eta)$, (b) applies, and a solution $g$ to \eqref{eqng} can be found with the estimate
$$\|Dg\|_{L^p(\eta)}\leq C_2\,\|\tilde{h}\|_{L^p(\eta)}\leq \frac{C_2}{(1-k^2)^\frac{1}2}\,\|h\|_{L^p(\omega)}.$$
With such a $g$, the function $f=g\circ\phi$ is well defined, and 
$$\aligned
\int|Df(z)|^p\,\omega(z)\,\mathrm{d}z&=\int|Dg(\phi(z))\,D\phi(z)|^p\,\omega(z)\,\mathrm{d}z\\
&=\int|Dg(\zeta)\,D\phi(\phi^{-1}(\zeta))|^p\,\omega(\phi^{-1}(z))\,J(\zeta,\phi^{-1})\mathrm{d}\zeta\\
&\leq\left(\frac{1+k}{1-k}\right)^\frac{p}2\int|Dg(\zeta)|^p\,J(\phi^{-1}(\zeta),\phi)^\frac{p}{2}\,\omega(\phi^{-1}(z))\,J(\zeta,\phi^{-1})\mathrm{d}\zeta\\
&=\left(\frac{1+k}{1-k}\right)^\frac{p}2\int|Dg(\zeta)|^p\, \eta(\zeta)\,\mathrm{d}\zeta.
\endaligned$$
due to the $\frac{1+k}{1-k}$-quasiconformality of $\phi$. Moreover, $f$ satisfies the desired equation, and so 1 follows, with constant $C_1=\frac{C_2}{1-k}$.\\
To show that (a) implies (b), for a given $\tilde{h}\in L^p(\eta)$ we have to find a solution of \eqref{eqn12} satisfying the estimate \eqref{eqn22}. Since this is a $\overline\partial$-equation, this could be done by simply convolving $\tilde{h}$ with the Cauchy kernel $\frac{1}{\pi z}$. However, the desired estimate for the solution $g$ cannot be obtained in this way, because at this point the weight $\eta$ is not known to belong to $A_p$. So we will proceed in a different maner. Namely, we make the change of coordinates $f=g\circ\phi$. We obtain for $f$ the equation
$$\overline\partial f(z)-\mu(z)\,\partial f(z) =h(z),$$
where $h(z)=\tilde{h}(\phi(z))\,\overline{\partial\phi(z)}\,(1-|\mu(z)|^2)$. Moreover, 
$$
\int |h(z)|^p\,\omega(z)\,\mathrm{d}z=\int|\tilde{h}(\zeta)|^p\, (1-|\mu(\phi^{-1}(\zeta))|^2)^{p/2}\,\eta(\zeta)\,\mathrm{d}\zeta\leq \int|\tilde{h}(\zeta)|^p\,\eta(\zeta)\,\mathrm{d}\zeta.
$$ 
Therefore (a) applies, and a solution $f$ can be found with $Df\in L^p(\omega)$ and $\|Df\|_{L^p(\omega)}\leq C_1\,\|\tilde{h}\|_{L^p(\eta)}$. As before, once $f$ is found, one simply constructs $g=f\circ\phi^{-1}$. By the chain rule,
$$\aligned
\int|Dg(\zeta)|^p\eta(\zeta)\mathrm{d}\zeta
&=\int|Dg(\phi^{-1}(z))|^pJ(z,\phi^{-1})\eta(\phi^{-1}(z))\mathrm{d}z\\
&=\int|D(g\circ\phi^{-1})(z)\,(D\phi^{-1}(z))^{-1}|^pJ(z,\phi^{-1})\eta(\phi^{-1}(z))\mathrm{d}z\\
&\leq \int|Df(z)|^p\,|D\phi(\phi^{-1}(z))|^pJ(z,\phi^{-1})\eta(\phi^{-1}(z))\mathrm{d}z\\
&\leq \left(\frac{1+k}{1-k}\right)^\frac{p}{2}\int|Df(z))|^p\,J(\phi^{-1}(z),\phi)^\frac{p}2J(z,\phi^{-1})\eta(\phi^{-1}(z))\mathrm{d}z\\
&= \left(\frac{1+k}{1-k}\right)^\frac{p}{2}\int|Df(z))|^p\,\omega(z)\mathrm{d}z.
\endaligned$$
Thus,  $\|Dg\|_{L^p(\eta)}\leq C_2\,\|\tilde{h}\|_{L^p(\eta)}$ with $C_2=\left(\frac{1+k}{1-k}\right)^\frac12\,C_1$, and (b) follows.
\end{proof}

\noindent
According to the previous Lemma, a priori estimates for $\overline\partial -\mu\,\partial$ in $L^p(\omega)$ are equivalent to a priori estimates for $\overline\partial$ in $L^p(\eta)$. However, by Theorem \ref{main}, if $\omega$ is taken in $A_p$, the first statement holds, at least, when $\mu$ is compactly supported and belongs to $VMO$. We then obtain the following consequence.

\begin{coro}\label{corol}
Let $\mu\in VMO$ be compactly supported, such that $\|\mu\|_\infty<1$, and let $\phi$ be the principal solution of 
$$\overline\partial \phi(z)-\mu(z)\,\partial \phi(z)=0.$$
If $1<p<\infty$ and $\omega\in A_p$, then the weight 
$$\eta(z)=\omega(\phi^{-1}(z))\,J(z,\phi^{-1})^{1-p/2}$$ 
belongs to $A_p$. Moreover, its $A_p$ constant $[\eta]_{A_p}$ can be bounded in terms of $\mu$, $p$ and $[\omega]_{A_p}$.
\end{coro}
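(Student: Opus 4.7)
The plan is to combine Theorem \ref{main} with Lemma \ref{conj4} and extract the $A_p$ condition for $\eta$ from the $L^p(\eta)$--boundedness of the Beurling--Ahlfors transform $\B$. First I would verify that the hypotheses of Lemma \ref{conj4} are satisfied: our $\mu\in VMO(\C)$ is compactly supported with $\|\mu\|_\infty<1$, and $\omega\in A_p$, so Theorem \ref{main} yields the a priori estimate \eqref{eqn21} with $C_1$ depending only on $\mu,p,[\omega]_{A_p}$, which is exactly statement (a) of the lemma. Invoking the lemma I obtain statement (b): for every $\tilde h\in L^p(\eta)$ the equation $\overline\partial g=\tilde h$ admits a solution $g$ with $\|Dg\|_{L^p(\eta)}\leq C_2\|\tilde h\|_{L^p(\eta)}$.

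Next, for $\tilde h\in C^\infty_c(\C)$ I would identify the solution provided by Lemma \ref{conj4} with the Cauchy transform. Tracing through the proofs, Theorem \ref{main} constructs $f=\mathcal C(\Id-\mu\B)^{-1}h$ (for the $h$ produced inside the lemma), and the lemma then sets $g=f\circ\phi^{-1}$. Since $\mathcal C$ applied to a compactly supported $L^p$ function tends to zero at infinity, and the principal solution $\phi^{-1}$ fixes infinity, $g(\zeta)\to 0$ as $|\zeta|\to\infty$. The only $W^{1,1}_{loc}$ solution of $\overline\partial g=\tilde h$ that vanishes at infinity is $g=\mathcal C\tilde h$, and hence $\partial g=\B\tilde h$. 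Combining this with the estimate from Lemma \ref{conj4} gives
\[
\|\B\tilde h\|_{L^p(\eta)}\leq\|Dg\|_{L^p(\eta)}\leq C\|\tilde h\|_{L^p(\eta)},\qquad \tilde h\in C^\infty_c(\C).
\]
A density argument -- using that $\eta$ is locally integrable, which follows from $\omega\in A_p$, H\"older's inequality and Astala's higher integrability of $J(\cdot,\phi)$ via the change of variables $\zeta=\phi(z)$ -- extends the inequality to all of $L^p(\eta)$. Thus $\B\colon L^p(\eta)\to L^p(\eta)$ is bounded, with norm controlled by $\mu,p,[\omega]_{A_p}$.

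Finally I would invoke the classical converse that the boundedness on $L^p(\eta)$ of a non-degenerate Calder\'on-Zygmund operator such as $\B$ forces $\eta\in A_p$, with $[\eta]_{A_p}$ quantitatively controlled by the operator norm (the argument parallels the Hilbert-transform characterization of $A_p$ discussed in \cite{GarRub} and is developed for the Beurling-Ahlfors transform in \cite{AsIwMa}). The main obstacle I anticipate lies in the middle step: one must carefully match the abstract solution provided by Lemma \ref{conj4} with the concrete Cauchy-transform solution in order to legitimately replace $\|Dg\|_{L^p(\eta)}$ by $\|\B\tilde h\|_{L^p(\eta)}$, and one has to verify enough regularity of $\eta$ for the approximation argument to be valid. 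Both points reduce to standard quasiconformal distortion estimates for the principal solution $\phi$, which deliver the final quantitative control $[\eta]_{A_p}\leq C(\mu,p,[\omega]_{A_p})$.
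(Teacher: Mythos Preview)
Your proposal is correct and follows essentially the same route as the paper: combine Theorem~\ref{main} with Lemma~\ref{conj4} to get an a priori estimate for $\overline\partial$ in $L^p(\eta)$, extract $L^p(\eta)$-boundedness of $\B$, and then invoke the converse characterization of $A_p$ via a non-degenerate singular integral (the paper cites \cite[Ch.~V, Proposition~7]{Ste2}).

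The one place where the paper is slicker is precisely the ``middle step'' you flag as the main obstacle. Instead of taking a general $\tilde h\in C^\infty_c(\C)$ and then working to identify the abstract solution with $\mathcal C\tilde h$ (via decay at infinity and uniqueness), the paper simply tests on $\tilde h=\overline\partial\varphi$ for $\varphi\in C^\infty_c(\C)$. Then $\varphi$ itself is a solution with $D\varphi\in L^p(\eta)$ (since $\eta\in L^1_{loc}$ and $D\varphi$ is compactly supported), and by the uniqueness built into Theorem~\ref{main}/Lemma~\ref{conj4} the abstract solution differs from $\varphi$ only by a constant, so $\partial g=\partial\varphi=\B(\overline\partial\varphi)$ immediately. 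This yields the $\B$-estimate on the class $\mathcal D^\ast=\{\psi\in C^\infty_c:\int\psi=0\}$ rather than on all of $C^\infty_c$; the paper then checks that $\mathcal D^\ast$ is dense in $L^p(\eta)$ by showing $\eta$ has infinite total mass (using $J(z,\phi)=1+O(|z|^{-2})$ near infinity and the fact that $A_p$ weights have infinite mass). Your tracing-through-the-construction argument also works, but the paper's shortcut sidesteps the identification entirely.
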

\begin{proof}
Under the above assumptions, by Theorem \ref{main}, we know that if $h\in L^p(\omega)$ then the equation $\overline\partial f-\mu\,\partial f=h$ can be found a solution $f$ with $Df\in L^p(\omega)$ and such that $\|Df\|_{L^p(\omega)}\leq C_0\,\|h\|_{L^p(\omega)}$, for some constant $C_0>0$ depending on $k,p$ and $[\omega]_{A_p}$. Equivalently, by Lemma \ref{conj4}, for every $\tilde{h}\in L^p(\eta)$ we can find a solution $g$ of the inhomogeneous Cauchy-Riemann equation
$$\overline\partial g=\tilde{h},$$
with $Dg\in L^p(\eta)$ and in such a way that the estimate
$$\|Dg\|_{L^p(\eta)}\leq C\,\|\tilde{h}\|_{L^p(\eta)}$$
holds for some constant $C$ depending on $C_0$, $k$ and $p$. Now, let us choose $\varphi\in{\mathcal C}^\infty_0(\C)$ and set $\tilde{h}=\overline\partial\varphi$. Then of course $g=\varphi$ and $\partial \varphi=\B(\overline\partial \varphi)$, and the above inequality says that
$$\||\partial\varphi|+|\overline\partial\varphi|\|_{L^p(\eta)}\leq C\,\|\overline\partial\varphi\|_{L^p(\eta)},$$
whence the estimate
\begin{equation}\label{finalbeur}
\|\B(\psi)\|_{L^p(\eta)}\leq (C^p-1)^\frac1p\,\|\psi\|_{L^p(\eta)}
\end{equation}
holds for any $\psi\in{\mathcal D}^\ast=\{\psi\in{\mathcal C}^\infty_c(\C); \int\psi=0\}$. It turns out that ${\mathcal D}^\ast$ is a dense subclass of $L^p(\eta)$, provided that $\eta\in L^1_{loc}$ is a positive function with infinite mass. But this is actually the case. Indeed, one has
$$
\int_{D(0,R)}\eta(\zeta)\,\mathrm{d}\zeta=\int_{\phi^{-1}(D(0,R))}\omega(z)\,J(z,\phi)^\frac{p}2\,\mathrm{d}z.
$$
Above, the integral on the right hand side certainly grows to infinite as $R\to\infty$. Otherwise, one would have that $J(\cdot,\phi)^\frac12\in L^p(\omega)$. But $\phi$ is a principal quasiconformal map, hence $J(z,\phi)=1+O(1/|z|^2)$ as $|z|\to\infty$. Thus for large enough $N>M>0$, 
$$
\int_{M<|z|<N}J(z,\phi)^\frac{p}{2}\,\omega(z)\,\mathrm{d}z\geq C\, \int_{M<|z|<N} \omega(z)\,\mathrm{d}z
$$
and the last integral above blows up as $N\to\infty$, because $\omega$ is an $A_p$ weight. \\
Therefore, the estimate \eqref{finalbeur} holds for all $\psi$ in $L^p(\eta)$. By \cite[Ch. V, Proposition 7]{Ste2}, this implies that $\eta\in A_p$, and moreover, $[\eta]_{A_p}$ depends only on the constant $(C^p-1)^\frac1p$, that is, on $k$, $p$ and $[\omega]_{A_p}$.
\end{proof}
\noindent
The above Corollary is especially interesting in two particular cases. First, for the constant weight $\omega=1$ the above result says that
$$J(\cdot,\phi^{-1})^{1-p/2}\in A_p,\hspace{2cm}1<p<\infty.$$
Without the $VMO$ assumption, this is only true for the smaller range $1+\|\mu\|_\infty<p<1+\frac{1}{\|\mu\|_\infty}$ (see e.g. \cite[Theorem 13.4.2]{AsIwMa}). Secondly, by setting $p=2$ in Corollary \ref{corol}  we get the following.

\begin{coro}\label{weightcomp}
Let $\mu\in VMO$ be compactly supported, and assume that $\|\mu\|_\infty<1$. Let $\phi$ be the principal solution of 
$$\overline\partial \phi(z)-\mu(z)\,\partial \phi(z)=0.$$
Then, for every $\omega\in A_2$ one has $\omega\circ\phi^{-1}\in A_2$.
\end{coro}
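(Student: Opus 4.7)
The plan is to read off the conclusion directly from Corollary \ref{corol} specialized to $p=2$. In the statement of that Corollary, the weight produced is
$$\eta(z)=\omega(\phi^{-1}(z))\,J(z,\phi^{-1})^{1-p/2},$$
and the exponent $1-p/2$ vanishes precisely at $p=2$. For this value, the Jacobian factor drops out and $\eta$ coincides with the composition $\omega\circ\phi^{-1}$. Since $\omega\in A_2$ by hypothesis, Corollary \ref{corol} applied with $p=2$ yields $\eta\in A_2$, which is exactly what is claimed.

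Before invoking that Corollary I would double-check that nothing degenerates along its proof at the endpoint $p=2$. Theorem \ref{main} supplies the a priori estimate in $L^2(\omega)$ with constant depending only on $\mu$, $p$ and $[\omega]_{A_2}$; Lemma \ref{conj4} transfers this to an a priori estimate for the inhomogeneous $\overline\partial$ equation in $L^2(\eta)$; and the density argument used in the proof of Corollary \ref{corol}, combined with the standard characterization of $A_p$ weights via boundedness of the Beurling--Ahlfors transform, then yields $\eta\in A_2$. None of these steps is problematic at $p=2$, so the chain runs unchanged.

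In short, there is no real obstacle: the whole force of the statement lies in Theorem \ref{main}, and the value $p=2$ is distinguished only by the fact that the Jacobian weighting in Corollary \ref{corol} trivialises. Corollary \ref{weightcomp} is therefore an immediate specialisation of Corollary \ref{corol} and admits a one-line proof.
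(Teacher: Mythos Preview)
Your proposal is correct and follows exactly the paper's own approach: the paper derives Corollary \ref{weightcomp} by setting $p=2$ in Corollary \ref{corol}, so that the Jacobian exponent $1-p/2$ vanishes and $\eta=\omega\circ\phi^{-1}$. Your additional check that the proof of Corollary \ref{corol} runs without degeneracy at $p=2$ is sound but not needed, since that Corollary is already stated for all $1<p<\infty$.
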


\noindent
The above result drives us to the problem of finding what homeomorphisms $\phi$ preserve the $A_p$ classes under composition with $\phi^{-1}$. Note that preserving $A_p$ forces also the preservation of the space $BMO$ of functions with bounded mean oscillation, and thus such homeomorphisms $\phi$ must be quasiconformal \cite{Rei2}. However, at level of Muckenhoupt weights, the question is deeper. As an example, simply consider the weight
$$
\omega(z)=\frac{1}{|z|^\alpha},\quad 
$$
and its composition with the inverse of a radial stretching $\phi(z)=z|z|^{K-1}$. It is clear that the values of $p$ for which $A_p$ contains $\omega$ and $\omega\circ\phi^{-1}$ are \emph{not} the same, whence preservation of $A_p$ requires something else. This question was solved by Johnson and Neugebauer \cite{JohNeu} as follows.

\begin{thm}\label{thm:gog}
Let $\phi:\mathbb C\to \mathbb C$ be $K$-quasiconformal. Then, the following statements are equivalent:
\begin{enumerate}
\item If $\omega\in A_2$ then $\omega\circ\phi^{-1}\in A_2$ quantitatively.
\item For a fixed $p\in(1,\infty)$, if $\omega\in A_p$ then $\omega\circ \phi^{-1}\in A_p$ quantitatively.
\item $J(\cdot,\phi^{-1})\in A_p$ for every $p\in(1,\infty)$.
\end{enumerate}
\end{thm}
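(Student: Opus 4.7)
The strategy is to close the cycle of equivalences by proving $(3)\Rightarrow(1)$ and $(3)\Rightarrow(2)$ via a change-of-variables computation, and the two reverse implications $(1)\Rightarrow(3)$, $(2)\Rightarrow(3)$ by testing the hypothesis against a one-parameter family of power weights built from the Jacobian $J(\cdot,\phi)$. The reverse implications are the hard part.

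\textbf{The easy direction: $(3)\Rightarrow(1),(2)$.} Fix a ball $B\subset\C$ and set $Q=\phi^{-1}(B)$. By the distortion theorem for $K$-quasiconformal maps, $Q$ sits between two concentric Euclidean balls of comparable radii, with comparability constant depending only on $K$; in particular, $A_p$-type averages over $Q$ differ from those over the circumscribed ball by at most a constant. Change of variables gives, for any nonnegative $F$,
\[
\fint_B F\circ\phi^{-1}\,\mathrm{d}z=\frac{\int_Q F\,J(\cdot,\phi)\,\mathrm{d}w}{\int_Q J(\cdot,\phi)\,\mathrm{d}w},
\]
so applying this with $F=\omega$ and $F=\omega^{1-p'}$ rewrites the $A_p$-characteristic of $\omega\circ\phi^{-1}$ at $B$ as
\[
\frac{\bigl(\fint_Q \omega\,J(\cdot,\phi)\bigr)\bigl(\fint_Q \omega^{1-p'}J(\cdot,\phi)\bigr)^{p-1}}{\bigl(\fint_Q J(\cdot,\phi)\bigr)^p}.
\]
Hypothesis $(3)$, together with the chain-rule identity $J(z,\phi^{-1})=1/J(\phi^{-1}(z),\phi)$ and the same change of variables, is equivalent to $J(\cdot,\phi)$ satisfying reverse Hölder inequalities of every order on the family of quasiballs $Q$. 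Since $\omega\in A_p$ and its dual $\omega^{1-p'}\in A_{p'}$ also enjoy self-improving reverse Hölder, Hölder's inequality with matched exponents separates the factors:
\[
\fint_Q \omega\,J(\cdot,\phi)\leq C\bigl(\fint_Q\omega\bigr)\bigl(\fint_Q J(\cdot,\phi)\bigr),
\]
and analogously for $\omega^{1-p'}$. Substitution cancels the $J(\cdot,\phi)$-averages against the denominator, leaving $C\,[\omega]_{A_p(Q)}\leq C'\,[\omega]_{A_p}$; specializing to $p=2$ gives $(3)\Rightarrow(1)$.

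\textbf{The hard direction: $(1),(2)\Rightarrow(3)$.} For $(1)\Rightarrow(3)$, we test $(1)$ against the power weights $\omega_\alpha(w):=J(w,\phi)^\alpha$. Since $J(\cdot,\phi)\in A_\infty$—a basic property of quasiconformal Jacobians—there is an open interval $(-\alpha_0,\alpha_0)$ of exponents for which $\omega_\alpha\in A_2$ with controlled constant. The chain rule computes the pullback as
\[
\omega_\alpha\circ\phi^{-1}(z)=J(\phi^{-1}(z),\phi)^\alpha=J(z,\phi^{-1})^{-\alpha},
\]
so hypothesis $(1)$ forces $J(\cdot,\phi^{-1})^{\pm\alpha}\in A_2$ for every $\alpha\in(-\alpha_0,\alpha_0)$. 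By the openness of the Muckenhoupt classes, the self-improvement of reverse Hölder, and the Jones factorization theorem, this one-parameter family of memberships upgrades to $J(\cdot,\phi^{-1})\in A_q$ for every $q\in(1,\infty)$, which is $(3)$. The implication $(2)\Rightarrow(3)$ runs analogously, testing against products $v\cdot J(\cdot,\phi)^\alpha$ with $v\in A_p$ and $|\alpha|$ small. The main obstacle is precisely this upgrade, from the partial information ``$J(\cdot,\phi^{-1})^{\pm\alpha}\in A_2$ for small $\alpha$'' to full membership in every $A_q$; a more direct route, taken by Johnson and Neugebauer in \cite{JohNeu}, is to characterize preservation of $A_p$ under composition by a purely geometric condition on $\phi$ and verify that condition from $(1)$ or $(2)$ alone.
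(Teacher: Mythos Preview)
First, note that the paper does not prove Theorem~\ref{thm:gog} at all: it is quoted from Johnson--Neugebauer \cite{JohNeu} and used as a black box. So there is no ``paper's own proof'' to compare against; your attempt is being measured against the original source.

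Your easy direction $(3)\Rightarrow(1),(2)$ is correct. The change-of-variables identity is right, and the key observation---that $(3)$ is equivalent, after pulling back, to $J(\cdot,\phi)$ satisfying a reverse H\"older inequality of \emph{every} exponent on (quasi)balls---is exactly what makes the H\"older separation step go through, since $\omega$ and $\omega^{1-p'}$ each satisfy reverse H\"older of \emph{some} fixed exponent.

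The hard direction, however, has a genuine gap. Your plan is to feed $\omega_\alpha=J(\cdot,\phi)^\alpha$ into hypothesis~$(1)$, obtain $J(\cdot,\phi^{-1})^{-\alpha}\in A_2$ for all $|\alpha|<\alpha_0$, and then ``upgrade'' this to $J(\cdot,\phi^{-1})\in A_q$ for every $q>1$. But that upgrade is simply false as an abstract implication: take $v(x)=|x|^M$ in $\R^2$ with $M$ large. Then $v^\alpha=|x|^{M\alpha}\in A_2$ precisely for $|\alpha|<2/M$, yet $v\in A_q$ only for $q>1+M/2$. So knowing $v^\alpha\in A_2$ for small $\alpha$ tells you nothing about $v\in A_q$ for small $q$. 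None of the tools you invoke (openness of $A_p$, reverse H\"older self-improvement, Jones factorization) can bridge this, because each of them moves you only an $\epsilon$ in the exponent, not from $\alpha_0\ll 1$ up to $\alpha=1$.

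You tacitly concede this in your last sentence by deferring to \cite{JohNeu}, but that makes the argument circular: you are citing the very result you set out to prove. The actual Johnson--Neugebauer argument does not go through power weights; it passes through a geometric characterization (comparability of $|\phi(B)|$ with $|B|\cdot J$-averages, uniformly over scales) which can be extracted directly from hypothesis~$(1)$ or~$(2)$ applied to carefully chosen weights, and which is \emph{equivalent} to $(3)$. If you want a self-contained proof you must supply that mechanism; the small-power test alone does not suffice.
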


\noindent
It follows from Corollary \ref{weightcomp} and Theorem \ref{thm:gog} that, if $\mu\in VMO$ is compactly supported, $\|\mu\|_\infty\leq k<1$ and $\phi$ is the principal solution to the $\C$-linear equation $\overline\partial\phi=\mu\,\partial\phi$, then  
$$J(\cdot,\phi^{-1})\in\bigcap_{p>1}A_p.$$
Note that if the $VMO$ assumption is removed, then we can only guarantee 
$$J(\cdot,\phi^{-1})\in\bigcap_{p>\frac{1+k}{1-k}}A_p.$$
It is not clear to the authors what is the role of $\C$-linearity in the above results concerning the regularity of the jacobian. In other words, there seems to be no reason for Theorem \ref{weightcomp} to fail if one replaces the $\C$-linear equation by the generalized one, while mantainning the ellipticity, compact support and smoothness on the coefficients. In fact, there is a deep connection between this question and the problem of determining those weights $\omega>0$ for which the estimate
$$\|Df\|_{L^2(\omega)}\leq C\,\|\overline\partial f-\mu\,\partial f-\nu\,\overline{\partial f}\|_{L^2(\omega)}$$
holds for any $f\in{\mathcal C}^\infty_0(\C)$. The following result, which is a counterpart of Lemma \ref{conj4}, explains this connection.

% That is, one could think that the same results hold for the composition operator $\omega\mapsto\omega\circ\phi^{-1}$ with the solution to \emph{any} Beltrami equation with $VMO$ coefficients, not necessarily $\C$-linear. To describe it in a different way, we are interested in finding what first order elliptic operators can be used to characterize the Muckenhoupt 

%This enchouraged us to seek for a generalized version of Lemma \ref{conj4}, which we state below.

\begin{lema}\label{conj2}
To each pair $\mu,\nu\in L^\infty(\C)$ of compactly supported functions with $\||\mu|+|\nu|\|_\infty\leq k<1$, let us associate, on one hand, the principal solution $\phi$ to the equation
$$\overline\partial \phi(z)-\mu(z)\,\partial\phi(z)-\nu(z)\,\overline{\partial\phi(z)}=0,$$
and on the other, the function $\lambda$ defined by $\lambda\circ\phi=\frac{-2i\nu}{1-|\mu|^2+|\nu|^2}$. For a fixed weight $\omega$, let us define
$$\eta(\zeta)=\omega(\phi^{-1}(\zeta))\,J(\zeta,\phi^{-1})^{1-\frac{p}2}.$$
The following statements are equivalent:
\begin{enumerate}
\item[\emph{(a)}] For every $h\in L^p(\omega)$, the equation 
$$\overline\partial f(z)-\mu(z)\,\partial f(z)-\nu(z)\,\overline{\partial f(z)}=h(z) $$ 
has a solution $f$ with $Df\in L^p(\omega)$ and $\|Df\|_{L^p(\omega)}\leq C\, \|h\|_{L^p(\omega)}$.
\item[\emph{(b)}] For every $\tilde{h}\in L^p(\eta)$, the equation 
$$\overline\partial g(\zeta)-\lambda(\zeta)\,\Im(\partial g(\zeta))=\tilde{h}(\zeta)$$ 
has a solution $g$ with $Dg\in L^p(\eta)$ and $\|Dg\|_{L^p(\eta)}\leq C\, \|\tilde{h}\|_{L^p(\eta)}$.
\end{enumerate}
\end{lema}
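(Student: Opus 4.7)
The plan is to imitate the proof of Lemma \ref{conj4}: transfer the generalized Beltrami equation for $f$ to an $\R$-linear equation for $g=f\circ\phi^{-1}$ by using the principal solution $\phi$ itself as the change of coordinates. Unlike the $\C$-linear case of Lemma \ref{conj4}, the $\nu$-contribution cannot be fully eliminated; it survives as an $\R$-linear remainder involving $\Im(\partial g)$, whose coefficient is precisely the function $\lambda$ defined in the statement.

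Assuming (a), set $g=f\circ\phi^{-1}$ and write $A=\partial\phi$, $B=\overline\partial\phi$. The standard inverse-mapping identities $\partial\phi^{-1}\circ\phi=\bar A/J(z,\phi)$ and $\overline\partial\phi^{-1}\circ\phi=-B/J(z,\phi)$ give
$$\partial g\circ\phi=\frac{\bar A\,\partial f-\bar B\,\overline{\partial f}}{J(z,\phi)},\qquad \overline\partial g\circ\phi=\frac{-B\,\partial f+A\,\overline{\partial f}}{J(z,\phi)}.$$
Substituting $B=\mu A+\nu\bar A$ (the equation for $\phi$) and $\overline\partial f=\mu\,\partial f+\nu\,\overline{\partial f}+h$ (the equation for $f$), and expanding $\overline{\overline\partial f}=\bar\mu\,\overline{\partial f}+\bar\nu\,\partial f+\bar h$, a direct computation in which the identities
$$\bar A-\mu\bar B+\bar\nu B=(1-|\mu|^2+|\nu|^2)\,\bar A,\qquad -A+\bar\mu B-\nu\bar B=-(1-|\mu|^2+|\nu|^2)\,A$$
are the key cancellations, yields
$$\overline\partial g\circ\phi=\frac{\nu(A\,\overline{\partial f}-\bar A\,\partial f)+hA}{J(z,\phi)},$$
$$2i\,\Im(\partial g)\circ\phi=\frac{(1-|\mu|^2+|\nu|^2)(\bar A\,\partial f-A\,\overline{\partial f})+B\bar h-\bar B h}{J(z,\phi)}.$$
With the specific choice $\lambda\circ\phi=-2i\nu/(1-|\mu|^2+|\nu|^2)$, the $\partial f,\overline{\partial f}$ cross terms cancel, leaving
$$\overline\partial g-\lambda\,\Im(\partial g)=\tilde h,\qquad \tilde h\circ\phi=\frac{1}{J(z,\phi)}\Bigl[hA+\tfrac{\nu(B\bar h-\bar B h)}{1-|\mu|^2+|\nu|^2}\Bigr].$$

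With $\tilde h$ identified, the norm estimates transfer by the same change-of-variables argument as in Lemma \ref{conj4}. The $K$-quasiconformality of $\phi$ with $K=(1+k)/(1-k)$ gives $(1-k^2)|A|^2\leq J(z,\phi)\leq|A|^2$ and $|B|\leq k|A|$, so that both $|A|/J(z,\phi)^{1/2}$ and $|B|/J(z,\phi)^{1/2}$ are bounded by constants depending only on $k$. Combined with $\eta(\zeta)=\omega(\phi^{-1}(\zeta))J(\zeta,\phi^{-1})^{1-p/2}$ and the substitution $\zeta=\phi(z)$, this yields $\|\tilde h\|_{L^p(\eta)}\leq C(k)\|h\|_{L^p(\omega)}$, while the chain-rule identity $|Dg|^2\circ\phi\asymp|Df|^2/J(z,\phi)$ together with $|D\phi|^2\leq K\,J(z,\phi)$ gives $\|Dg\|_{L^p(\eta)}\leq C(k)\|Df\|_{L^p(\omega)}$, in perfect parallel with Lemma \ref{conj4}. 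The converse (b)$\Rightarrow$(a) is obtained by reversing the substitution, setting $f=g\circ\phi$ and inverting the $\R$-linear map $(h,\bar h)\mapsto\tilde h$, whose invertibility with $k$-dependent bounds is guaranteed by the ellipticity $|\mu|+|\nu|\leq k<1$.

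The main obstacle is purely algebraic: verifying that the specific combination $\lambda=-2i\nu/(1-|\mu|^2+|\nu|^2)$ is exactly what is required to cancel both the $\partial f$ and $\overline{\partial f}$ contributions in $\overline\partial g-\lambda\,\Im(\partial g)$, producing a right-hand side depending only on $h$ and $\bar h$. Once this Wirtinger identity is in place, the transfer of $L^p(\omega)$ estimates to $L^p(\eta)$ estimates is essentially a replay of the jacobian manipulations used in Lemma \ref{conj4}, relying only on the ellipticity of $\mu,\nu$ and the quasiconformality of $\phi$.
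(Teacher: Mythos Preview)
Your proposal is correct and follows precisely the approach the paper intends: the authors omit the proof of Lemma~\ref{conj2}, remarking only that ``the proof requires quite tedious calculations'' and ``follows the scheme of Lemma~\ref{conj4}''. You have carried out those calculations, and your verification that the choice $\lambda\circ\phi=-2i\nu/(1-|\mu|^2+|\nu|^2)$ is exactly what kills the $\partial f$ and $\overline{\partial f}$ terms is the heart of the matter; the subsequent transfer of $L^p(\omega)$ and $L^p(\eta)$ norms via the Jacobian identities is, as you say, a replay of Lemma~\ref{conj4}.

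One small notational wrinkle: in your first displayed chain-rule identities, the symbol $\overline{\partial f}$ should denote $\overline\partial f$ (the $\bar\partial$-derivative), whereas in the subsequent formula for $\overline\partial g\circ\phi$ it correctly denotes the complex conjugate of $\partial f$. The computations themselves are consistent, so this is purely a typesetting ambiguity rather than a mathematical gap.
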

\noindent
Although the proof requires quite tedious calculations, it follows the scheme of Lemma \ref{conj4}, and thus we omit it. From this Lemma, we would be very interested in answering the following question. 

\begin{quest}\label{2ndquest}
Let $\omega\in L^1_{loc}(\C)$ be such that $\omega(z)>0$ almost everywhere, and let $\lambda\in L^\infty(\C)$ be a compactly supported $VMO$ function, such that $\|\lambda\|_\infty<1$. If the estimate
$$\|Df\|_{L^p(\omega)}\leq C\,\|\overline\partial f-\lambda\,\Im(\partial f)\|_{L^p(\omega)}$$
holds for every $f\in{\mathcal C}^\infty_0$, is it true that $\omega\in A_2$? 
\end{quest}
\noindent
What we actually want is to find planar, elliptic, first order differential operators, different from the $\overline\partial$, that can be used to characterize the Muckenhoupt classes $A_p$. In this direction, an affirmative answer tho Question \ref{2ndquest} would allow us to characterize $A_2$ weights as follows: given $\mu,\nu\in VMO$ uniformly elliptic and compactly supported, a positive a.e. function $\omega\in L^1_{loc}$ is an $A_2$ weight if and only if there is a constant $C>0$ such that
\begin{equation}\label{aprioriweight}
\|Df\|_{L^2(\omega)}\leq C\,\|\overline\partial f-\mu\,\partial f-\nu\,\overline{\partial f}\|_{L^2(\omega)},\hspace{1cm}\text{ for every }f\in{\mathcal C}^\infty_0(\C).
\end{equation}
Note that if $\||\mu|+|\nu|\|_\infty<\epsilon$ is small enough, \eqref{aprioriweight} says that
$$\|\partial f\|_{L^2(\omega)}^2+\|\overline\partial f \|_{L^2(\omega)}\leq C\,\|\overline\partial f\|_{L^2(\omega)}+C\,\epsilon\,\|\partial f\|_{L^2(\omega)},$$
so if $\epsilon<\frac1C$ one easily gets that
$$\| \partial f \|_{L^2(\omega)}\leq \frac{C-1}{1-C\epsilon}\|\overline\partial f \|_{L^2(\omega)}.$$
From the above estimate, weighted bounds for $\B$ easily follow, and so if $\||\mu|+|\nu|\|_\infty<\epsilon$ then such a characterization holds. Question \ref{2ndquest} has an affirmative answer.\\
\\
{{\textbf{Acknowledgements.}} Sections 1 and 2 of the present paper were obtained by the second author in \cite{Cru}, jointly with J. Mateu and J. Orobitg. The authors were partially supported by projects 2009-SGR-420 (Generalitat de Catalunya), 2010-MTM-15657 (Spanish Ministry of Science) and NF-129254 (Programa Ram\'on y Cajal). 

\bibliographystyle{plain}
\bibliography{biblio2}

\vspace{1cm}
\noindent
Albert Clop\\
albertcp@mat.uab.cat\\
Ph. +34\,93\,581\,4541\\
\\
V\'\i ctor Cruz\\
vicruz@mat.uab.cat\\
Ph. +34\,93\,581\,3739\\
\\
Departament de Matem\`atiques\\
Facultat de Ci\`encies\\
Campus de la U.A.B.
08193-Bellaterra\\
Barcelona (Catalonia)\\
Fax +34\,581\,2790

\end{document}